\begin{document}


\renewcommand{\d}{\mathrm{d}}

\newcommand{\E}{\mathbb{E}}
\newcommand{\PP}{\mathbb{P}}
\newcommand{\RR}{\mathbb{R}}
\newcommand{\NN}{\mathbb{N}}
\newcommand{\fracs}[2]{{ \textstyle \frac{#1}{#2} }}

\newtheorem{theorem}{Theorem}[section]
\newtheorem{lemma}[theorem]{Lemma}
\newtheorem{coro}[theorem]{Corollary}
\newtheorem{defn}[theorem]{Definition}
\newtheorem{assp}[theorem]{Assumption}
\newtheorem{expl}[theorem]{Example}
\newtheorem{prop}[theorem]{Proposition}
\newtheorem{rmk}[theorem]{Remark}
\newtheorem{notation}[theorem]{Notation}

\def\a{\alpha} \def\g{\gamma}
\def\e{\varepsilon} \def\z{\zeta} \def\y{\eta} \def\o{\theta}
\def\vo{\vartheta} \def\k{\kappa} \def\l{\lambda} \def\m{\mu} \def\n{\nu}
\def\x{\xi}  \def\r{\rho} \def\s{\sigma}
\def\p{\phi} \def\f{\varphi}   \def\w{\omega}
\def\q{\surd} \def\i{\bot} \def\h{\forall} \def\j{\emptyset}

\def\be{\beta} \def\de{\delta} \def\up{\upsilon} \def\eq{\equiv}
\def\ve{\vee} \def\we{\wedge}

\def\t{\tau}

\def\F{{\cal F}}
\def\T{\tau} \def\G{\Gamma}  \def\D{\Delta} \def\O{\Theta} \def\L{\Lambda}
\def\X{\Xi} \def\S{\Sigma} \def\W{\Omega}
\def\M{\partial} \def\N{\nabla} \def\Ex{\exists} \def\K{\times}
\def\V{\bigvee} \def\U{\bigwedge}

\def\1{\oslash} \def\2{\oplus} \def\3{\otimes} \def\4{\ominus}
\def\5{\circ} \def\6{\odot} \def\7{\backslash} \def\8{\infty}
\def\9{\bigcap} \def\0{\bigcup} \def\+{\pm} \def\-{\mp}
\def\<{\langle} \def\>{\rangle}

\def\lev{\left\vert} \def\rev{\right\vert}
\def\1{\mathbf{1}}

\newcommand\wD{\widehat{\D}}

\newcommand{\ls}[1]{\textcolor{red}{\tt Lukas: #1 }}

\title{ \bf Discretizing the Heston Model: \\ An Analysis of the Weak Convergence Rate}

\author{Martin Altmayer \footnote{TNG Technology Consulting GmbH,
Betastra{\ss}e 13a,
D-85774 Unterf\"ohring, Germany}
 \and  Andreas Neuenkirch  \footnote{Institut f\"ur Mathematik, Universit\"at Mannheim,  A5, 6, D-68131 Mannheim, Germany, \texttt{neuenkirch@kiwi.math.uni-mannheim.de  }} 
}

\date{}

\maketitle

\begin{abstract}
In this manuscript we analyze the weak convergence rate of a discretization scheme for the Heston model. Under mild assumptions on the smoothness of the payoff and on 
the Feller index of the volatility process, respectively,  we establish  a weak convergence  rate of  order one.  Moreover, under almost minimal assumptions we obtain weak convergence without a rate. These results are accompanied by several numerical examples.
Our error analysis relies on
a classical  technique from Talay and Tubaro \cite{TT}, 
a recent regularity estimate for the Heston PDE  \cite{FP} and Malliavin calculus. 

\medskip
\noindent \textsf{{\bf Key words: } \em Heston model, discretization schemes for SDEs, Kolmogorov PDE, Malliavin calculus \\
}
\medskip
\noindent{\small\bf 2010 Mathematics Subject Classification: 60H07; 60H35; 65C05; 91G60 }

\end{abstract}

\section{Introduction and Main Results}
\label{sec;introduction}

The Heston model \cite{heston} is given by the stochastic differential equation (SDE)
\begin{align}  \begin{split}
\d S_t &= \mu S_t \d t + \sqrt{v_t} S_t  (\rho \d W_t + \sqrt{1-\rho^2} \d B_t), \qquad    \, \,  t \in [0,T], \\
\d V_t &= \kappa(\lambda - V_t) \d t + \theta \sqrt{V_t} \d W_t,  \qquad  \qquad \qquad \qquad t \in [0,T], \label{hes-eq}
\end{split}
\end{align}
with $S_0, V_0, \kappa, \lambda, \theta >0$, $\mu \in \mathbb{R}$,  $\rho \in [-1,1]$ and independent Brownian motions $W,B$. It is a simple and popular extension of the Black--Scholes model.
Here $S$ models the price of an asset and $V$ its volatility, which is given by the so called  Cox--Ingersoll--Ross process (CIR). 

\medskip

While numerous discretization schemes and simulation methods for SDE \eqref{hes-eq} have been proposed and numerically tested, see e.g.  \cite{KJ,And-H,NN,lord-comparison,AA2,GlaKim}, an analysis of the weak convergence rate has not been carried out so far\,---\,up to the best of our knowledge.
In this manuscript we are addressing this gap by analyzing a numerical scheme, which uses the drift-implicit Milstein scheme  \cite{KGJ} for the volatility and an Euler discretization for the log-Heston
price. Our approach relies on a recent regularity result for the Heston PDE \cite{FP},  tail estimates for the CIR process, the Kolmogorov PDE approach for the weak error analysis from \cite{TT} and Malliavin calculus tools. 
It is crucial that the scheme is built on a positivity preserving discretization of the CIR process,
\begin{itemize}
 \item[(i)] since the domain of the Kolmogorov PDE is restricted to non-negative values
of the volatility, 
\item[(ii)] since the positivity of the discretization scheme allows to establish required estimates of its inverse moments.
\end{itemize}
Note that SDE \eqref{hes-eq} can be simulated exactly, an algorithm for this was given by Broadie and Kaya in \cite{BK}.
Nevertheless discretization schemes for the Heston model are important and interesting for at least two reasons: (i) 
they can be easily extended to multidimensional versions of the Heston model consisting of $d$ assets (for which exact simulation methods are unknown), and  (ii) the method given in \cite{BK} still requires the numerical inversion of a characteristic function, which turns out to be a computational bottleneck.

\medskip

It is common numerical practice to consider the log-Heston model  instead of the Heston model. The transformation $X_t=\log(S_t)$  yields the SDE
\begin{align} \begin{split}
 \d X_t &= \big(\mu-\frac12 V_t\big ) \d t + \sqrt{V_t} \d (\rho W_t + \sqrt{1-\rho^2} B_t), \\
 \d V_t &= \kappa (\lambda-V_t) \d t + \theta \sqrt{V_t} \d W_t, \label{hes-log} \end{split}
\end{align}
with $X_0=x_0=\log(S_0) \in \mathbb{R}$, $V_0=v_0 >0$,
and the exponential is then incorporated in the payoff $g:[0,\infty) \rightarrow \mathbb{R}$, i.e.\ $g$ is replaced by
$f: \mathbb{R} \rightarrow  \mathbb{R}$ with $f(x)=g(\exp(x))$.

\medskip

To analyse the  convergence rate, we will work under the following assumption on the payoffs and the parameters of the CIR process (for a discussion see Remarks \ref{rem-S} and \ref{rem-Feller}):
{\it \begin{itemize}
 \item[\textrm{(S)}] The function $f\colon \mathbb{R} \rightarrow \mathbb{R}$ is twice continuous differentiable with compact support. Moreover, there exists an $\varepsilon >0$ such that $f''\colon \mathbb{R} \rightarrow \mathbb{R}$ is H\"older continuous of order 
 $\varepsilon$, i.e.\ $f''$ satisfies
 $$ \sup_{x,y \in \mathbb{R}, \, x \neq y} \frac{|f''(x)-f''(y)|}{|x-y|^{\varepsilon}} < \infty $$
 \item[\textrm{(F)}] We have $$ \nu:= \frac{2\kappa \lambda}{\theta^2}>2$$
\end{itemize}} 

\medskip
 
The  scheme  we consider consists of a drift-implicit Milstein scheme for the volatility and  an Euler scheme for the log-price:
\begin{align*}
{} \qquad {x}_0 &= x_0, \qquad {v}_0 = v_0, \\
(D) \qquad {x}_{n+1} &= {x}_n + \Big(\mu-\frac12 {v}_n\Big) (t_{n+1}-t_n)        + \sqrt{{v}_n} \Big( \rho \Delta_n W  + \sqrt{1-\rho^2} \Delta_n B \Big), \\
{} \qquad {v}_{n+1} &= {v}_n + \kappa(\lambda-{v}_{n+1}) (t_{n+1}-t_n) + \theta \sqrt{{v}_n} \Delta_n W + \frac{\theta^2}{4} \big((\Delta_n W)^2 - (t_{n+1}-t_n)   \big)
\end{align*}
Here $$0 = t_0 < t_1 < \dots < t_N  = T$$ is a discretization of $[0,T]$ and we use the abbreviations $$\Delta_n B = B_{t_{n+1}} - B_{t_n} \qquad  \textrm{and}  \qquad \Delta_n W = W_{t_{n+1}} - W_{t_n}.$$
This scheme is well defined, iff $4 \frac{ \kappa \lambda}{\theta^2} \geq 1$, since
the discretization of the CIR process can be written as
$$ {v}_{n+1} =  \frac{1}{1+ \kappa (t_{n+1}-t_n)} \left( \Big( \sqrt{{v}_{n}} + \frac{\theta}{2} \Delta W_n \Big)^2 +  \Big( \kappa \lambda - \frac{\theta^2}{4}\Big)(t_{n+1}-t_n) \right),$$
and thus $v_n \geq 0$, $n=0,1, \ldots$.
\medskip


In the following we use the notations $$ \Delta=\max_{k=1, \ldots, N} |t_{k}-t_{k-1}|$$
for the maximal stepsize and
$$ e(f;\Delta)= | Ef({x}_N) - Ef(X_T) | $$
for the weak error.
\medskip

\begin{theorem}\label{theorem-main} Assume  (S)  and  (F). Then, for all $\alpha \in (0,1)$ the scheme
(D)  satisfies
$$   \lim_{\Delta \rightarrow 0} \frac{e(f;\Delta)}{\Delta^{\alpha}} =0  $$   
\end{theorem}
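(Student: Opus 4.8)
The plan is to follow the classical weak-error expansion of Talay and Tubaro \cite{TT}. First I would introduce the solution of the Kolmogorov backward PDE associated with the log-Heston diffusion \eqref{hes-log}, namely
$$ u(t,x,v) = \E\big[ f(X_T) \mid X_t = x, V_t = v \big], \qquad (t,x,v) \in [0,T] \times \RR \times [0,\infty), $$
which satisfies $\partial_t u + \mathcal{L} u = 0$ with terminal condition $u(T,\cdot,\cdot) = f$, where $\mathcal{L}$ denotes the generator of \eqref{hes-log}. Since $t_N = T$, $u(T,x,v)=f(x)$, and $u(0,x_0,v_0) = \E f(X_T)$, the weak error can be written as the telescoping sum
$$ e(f;\Delta) = \Big| \sum_{n=0}^{N-1} \E\big[ u(t_{n+1}, {x}_{n+1}, {v}_{n+1}) - u(t_n, {x}_n, {v}_n) \big] \Big|, $$
so that the global error decomposes into the one-step (local) errors of the scheme (D) measured against $u$.

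For each summand I would Taylor-expand $u(t_{n+1}, {x}_{n+1}, {v}_{n+1})$ around $(t_n, {x}_n, {v}_n)$ in the scheme increments, take the conditional expectation given the information up to time $t_n$, and use the PDE $\partial_t u = -\mathcal{L} u$ to cancel the leading contributions. Because the scheme (D) is the drift-implicit Milstein discretization of the CIR process combined with an Euler step for the log-price, its conditional moments match those of the diffusion to the order needed for the first-order terms and for the Milstein correction $\tfrac{\theta^2}{4}\big((\Delta_n W)^2 - (t_{n+1}-t_n)\big)$ to cancel against the second-order term. What survives is a remainder built from higher derivatives of $u$ multiplied by higher conditional moments of the increments, nominally of size $\Delta^2$ per step in the non-degenerate case.

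The main obstacle is that the log-Heston PDE degenerates at $v = 0$, where the diffusion coefficient $\sqrt{V}$ vanishes and is non-Lipschitz, so the derivatives of $u$ appearing in the remainder need not be bounded and may blow up as $v \downarrow 0$. To control them I would invoke the recent regularity estimate for the Heston PDE from \cite{FP}, which under (S) and the Feller condition (F) quantifies the admissible rate of this blow-up in terms of negative powers of $v$ near the boundary. These singular factors must then be balanced against moment and, crucially, inverse-moment bounds $\E[{v}_n^{-p}] < \infty$ for the scheme: here the positivity-preserving representation of ${v}_{n+1}$ noted above is essential, and the Feller index assumption $\nu > 2$ is what makes these inverse moments finite uniformly in $n$ and $\Delta$. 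Where the derivatives of $u$ are too singular to be estimated pointwise, I would use Malliavin integration by parts to transfer derivatives off $f$ (or off $u$) onto Malliavin weights, reducing the problematic terms to expectations of products of Malliavin weights and powers of ${v}_n^{-1}$, which are again handled by the inverse-moment estimates.

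Finally, combining a per-step remainder bound of order $\Delta^{1+\alpha}$ with the $N \sim \Delta^{-1}$ steps yields $e(f;\Delta) = o(\Delta^{\alpha})$ for every $\alpha \in (0,1)$. The loss of the full first order, and the passage to a limit equal to zero rather than a clean $O(\Delta)$ rate, reflects that (S) provides only $\varepsilon$-H\"older continuity of $f''$ and that the \cite{FP} estimates degenerate near the boundary, so one can extract any exponent strictly below one but not the endpoint itself.
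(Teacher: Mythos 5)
Your skeleton matches the paper's: telescoping the weak error through the Kolmogorov backward PDE as in Talay--Tubaro, controlling the degenerate derivatives via the Feehan--Pop estimates, using inverse moments of the positivity-preserving scheme under $\nu>2$, and rescuing the otherwise $O(\sqrt{\Delta})$ terms by a Malliavin integration by parts (in the paper, against the independent Brownian motion $B$, with weight $\frac{1}{t\sqrt{1-\rho^2}}\int_0^t \widehat{v}_{\eta(r)}^{-1/2}\,\d B_r$). However, there is a genuine gap at the very first step: you define $u$ with terminal condition $f(x)$, but the regularity result of \cite{FP} that you plan to invoke requires the terminal condition to be compactly supported as a function of $(x,v)$, and $h(x,v)=f(x)$ has unbounded support in the volatility direction. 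The paper therefore works throughout with the localized payoff $h(x,v)=f(x)\phi_M(v)$, where $\phi_M$ is a smooth cutoff vanishing for $v\geq 2M$, proves the one-step bounds (order $\Delta^2$ for the directly estimated terms, an integrable $\Delta_t/\sqrt{t}$ singularity for the Malliavin term) with constants of size $1+M^q$, and only then removes the localization: exponential tail estimates for both $V_T$ and $v_N$ (Lemma \ref{CIR_moments} (2) and Lemma \ref{inverse_moments} (3)) bound the truncation error by $c_{tail}\|f\|_\infty \exp(-\tfrac{2\kappa}{\theta^2}M)$, and the choice $M=-\tfrac{\theta^2}{2\kappa}\log(\Delta)$ balances the two contributions. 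Without this localization-plus-tail argument, the appeal to \cite{FP} is not justified and the proof does not close.

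This gap also explains why your diagnosis of the exponent loss is wrong. You attribute the statement $\lim_{\Delta\to 0} e(f;\Delta)/\Delta^\alpha = 0$ for $\alpha<1$ (rather than a clean $O(\Delta)$) to the mere $\varepsilon$-H\"older continuity of $f''$ and to boundary degeneracy of the \cite{FP} estimates. In fact, for payoffs compactly supported in both $x$ and $v$ the paper obtains genuine weak order one in \eqref{order_delta}; the loss to ``any $\alpha<1$'' comes precisely from the localization step, since $M\sim|\log\Delta|$ turns the constant $1+M^q$ into a factor $|\log\Delta|^q$, i.e.\ the final bound is $O(\Delta|\log\Delta|^q)$. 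Relatedly, your claim of a per-step remainder ``of order $\Delta^{1+\alpha}$'' is not what the argument produces: the per-step bounds are $O(\Delta^2)(1+M^q)$ plus $c(1+M^q)\int_{t_n}^{t_{n+1}} t^{-1/2}(t-\eta(t))\,\d t$, and it is the summation of the latter together with the $\Delta$-dependent choice of $M$ that yields the stated limit. A smaller technical point you gloss over: because the volatility scheme is drift-implicit, the paper does not Taylor-expand in the scheme increments directly but applies It\^o's formula to interpolation processes together with the transformed function $\widetilde{u}(t,x,v)=u(t,x,v/(1+\kappa\Delta_t))$; some device of this kind is needed to make the one-step expansion rigorous.
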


\medskip

For the weak convergence result without a rate we will assume on the Feller index that:
{\it 
\begin{itemize}
 \item[(F-min)] We have $$ \nu:= \frac{2\kappa \lambda}{\theta^2}> \frac{1}{2}$$
 \end{itemize}
}
 \medskip

\begin{theorem}
 \label{theorem-weak} Assume    (F-min) and let $f \in C(\mathbb{R} \setminus O ; \mathbb{R})$ with $O \subset \mathbb{R}$ a finite set. Moreover assume  that
 $$ \textrm{(Int)} \qquad  \limsup_{\Delta \rightarrow 0} E |f(x_N)|^{1+ \varepsilon} < \infty $$
 for some $\varepsilon >0$. Then (D) satisfies
 $$ \lim_{\Delta \rightarrow 0} e(f;\Delta) = 0$$
\end{theorem}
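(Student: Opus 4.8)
The plan is to deduce the result from two ingredients: convergence in distribution of the discretized log-price $x_N$ to $X_T$ as $\Delta \to 0$, and the uniform integrability of the family $\{ f(x_N) : \Delta > 0 \}$ supplied by assumption (Int). Once these are in hand, the finitely many discontinuities of $f$ are dealt with by a continuous-mapping argument, and the conclusion $E f(x_N) \to E f(X_T)$ follows from the classical fact that convergence in distribution together with uniform integrability implies convergence of the expectations (e.g.\ via the Skorokhod representation theorem).

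Concretely, I would first establish that $x_N \to X_T$ in probability, which in particular yields convergence in distribution. The engine here is the (strong) convergence of the positivity-preserving drift-implicit Milstein discretization of the CIR process: under (F-min) the scheme is well defined, since the constant term $(\kappa\lambda - \theta^2/4)(t_{n+1}-t_n)$ is strictly positive, and, combined with the tail and inverse-moment estimates for the CIR process available under $\nu > 1/2$, one obtains $v_N \to V_T$ in probability. Since $x_N$ is built from the same Brownian motions $W, B$ by an explicit map of the $\sqrt{v_n}$ and the increments, convergence of the volatility discretization transfers to $x_N \to X_T$ in probability.

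Next I would handle the discontinuities of $f$. The key point is that the law of $X_T$ is absolutely continuous, so that $\PP(X_T \in O) = 0$ for the finite exceptional set $O$. When $|\rho| < 1$ this is immediate from conditional Gaussianity: conditionally on $(W, V)$ the variable $X_T$ is Gaussian with variance $(1-\rho^2)\int_0^T V_t\, \mathrm{d}t > 0$ almost surely, hence has a density; in the fully correlated case the nondegeneracy of the Malliavin covariance of $X_T$ gives the same conclusion. Consequently $f$ is almost surely continuous at the limit $X_T$, and, writing $D_f \subseteq O$ for its discontinuity set, the continuous-mapping theorem upgrades $x_N \to X_T$ in distribution to $f(x_N) \to f(X_T)$ in distribution.

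Finally, assumption (Int) states that $\sup_{\Delta} E|f(x_N)|^{1+\varepsilon} < \infty$ for some $\varepsilon > 0$, which is exactly uniform integrability of $\{ f(x_N) \}$. Together with the distributional convergence $f(x_N) \to f(X_T)$ established above, this yields $E f(x_N) \to E f(X_T)$, i.e.\ $e(f;\Delta) \to 0$, completing the proof. I expect the main obstacle to be the first step: proving convergence in probability of the scheme right down to $\nu > 1/2$, where one cannot rely on the extra smoothness exploited by the strong-rate theory, and where the inverse-moment control of $v_n$ becomes delicate near the Feller boundary.
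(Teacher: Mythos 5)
Your overall architecture coincides with the paper's: establish $x_N \to X_T$ in a suitable sense (the paper proves $E|x_N - X_T| \to 0$, which gives convergence in probability), use that $X_T$ has a Lebesgue density so the finite exceptional set $O$ is hit with probability zero, and then combine distributional convergence of $f(x_N)$ with the uniform integrability supplied by (Int). That final part of your argument is sound, and your conditional-Gaussianity justification of the density of $X_T$ is a reasonable substitute for the paper's citation.

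The genuine gap is exactly where you flag it yourself: you assert, but do not prove, the convergence of the drift-implicit Milstein discretization of the CIR process under (F-min), and this is the hardest content of the theorem. In fact one needs more than $v_N \to V_T$ in probability: to push the convergence through the stochastic integral defining $x_N$ one needs $\sup_{k} E|v_k - V_{t_k}| \to 0$ (the paper uses the It\=o isometry together with $|\sqrt{x}-\sqrt{y}| \le \sqrt{|x-y|}$, so $L^1$-control of $v_k - V_{t_k}$ is what is required). Moreover, your proposed engine\,---\,``inverse-moment estimates for the CIR process available under $\nu>1/2$''\,---\,cannot work: for $\nu \in (1/2,1]$ neither $V_t$ nor $v_n$ has a finite first inverse moment (Lemma \ref{CIR_moments} and Lemma \ref{inverse_moments}), so inverse-moment arguments are unavailable precisely in the regime the theorem is designed to cover. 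The paper's resolution is a domination trick: the Milstein iterates dominate Alfonsi's drift-implicit square-root Euler iterates, $v_k \ge a_k$, whose $L^1$-convergence under (F-min) is known from \cite{HJN} (Lemma \ref{con_sqrt}); hence
\begin{equation*}
E|v_k - V_{t_k}| \;\le\; E(v_k - a_k) + E|V_{t_k}-a_k| \;\le\; E(v_k - V_{t_k}) + 2\,E|V_{t_k}-a_k|,
\end{equation*}
and the signed bias $E(v_k - V_{t_k})$ is controlled exactly because $Ev_k$ satisfies the drift-implicit Euler recursion for the linear ODE solved by $EV_t$, giving $\sup_k |E(v_k-V_{t_k})| \le c\Delta$. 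Combining the one-sided domination with this bias bound yields $\sup_k E|v_k - V_{t_k}| \to 0$, which is then fed into the estimate for $E|x_N - X_T|$. Without this (or an equivalent replacement), your first step is an assertion of the theorem rather than a proof of it.
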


\medskip If the correlation $\rho$ is negative, i.e.\ $\rho <0$,
assumption  {\it (Int) } is  satisfied e.g.\ for European call options, i.e.\ $f(\cdot )=(\exp(\cdot)-K)^+$,  and more generally  for $g \leq \textrm{const} \cdot \textrm{id}$. A negative correlation often appears in practice, see e.g. \cite{kimmel,BK}.
\medskip

\begin{prop}
 \label{prop-int} Assume    (F-min) and let $f\colon \mathbb{R} \rightarrow \mathbb{R}$ be such that $$ \sup_{x \in \mathbb{R}} |f(x)\exp(-x)| < \infty$$ If $\rho < 0$, then (Int) is satisfied.
\end{prop}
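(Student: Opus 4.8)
The plan is to reduce (Int) to an exponential-moment estimate and then exploit $\rho<0$ together with a discrete analogue of the identity $\int_0^T\sqrt{V_s}\,\d W_s=\theta^{-1}(V_T-V_0-\kappa\lambda T+\kappa\int_0^T V_s\,\d s)$. Writing $C:=\sup_{x}|f(x)\exp(-x)|<\infty$, we have $|f(x_N)|^{1+\varepsilon}\le C^{1+\varepsilon}\exp((1+\varepsilon)x_N)$, so it suffices to prove $\limsup_{\Delta\to0}E\exp(p\,x_N)<\infty$ for $p=1+\varepsilon$ with $\varepsilon>0$ small. Under (F-min) the scheme is well defined and $v_n\ge0$ for every $n$, which will be used repeatedly.

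The key step is to condition on the Brownian motion $W$. Given $W$, all $v_n$ and all increments $\Delta_nW$ are determined, and the only remaining randomness in $x_N$ comes from the independent increments $\Delta_nB$; hence, conditionally on $W$, $x_N$ is Gaussian with mean $m=x_0+\mu T-\tfrac12\sum_n v_n h_n+\rho\sum_n\sqrt{v_n}\,\Delta_nW$ and variance $\sigma^2=(1-\rho^2)\sum_n v_n h_n$, where $h_n:=t_{n+1}-t_n$. Using $E[\exp(pZ)\mid W]=\exp(pm+\tfrac12 p^2\sigma^2)$ for conditionally Gaussian $Z$, the exponent becomes a constant plus $c\sum_n v_n h_n+p\rho\sum_n\sqrt{v_n}\,\Delta_nW$, with $c=\tfrac{p}{2}(p(1-\rho^2)-1)$.

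Next I would rewrite the volatility recursion as $\theta\sqrt{v_n}\,\Delta_nW=v_{n+1}-v_n+\kappa(v_{n+1}-\lambda)h_n-\tfrac{\theta^2}{4}((\Delta_nW)^2-h_n)$ and sum over $n$, telescoping the $v$-terms, so as to express $\sum_n\sqrt{v_n}\,\Delta_nW$ through $v_N$, $\sum_n v_{n+1}h_n$, $\sum_n(\Delta_nW)^2$ and constants. Substituting this back and invoking $\rho<0$ (so that $\tfrac{p\rho}{\theta}v_N\le0$ and $\tfrac{p\rho\kappa}{\theta}\sum_n v_{n+1}h_n\le0$) together with $c<0$ (which holds for $\varepsilon$ small, since then $p(1-\rho^2)<1$ because $\rho\neq0$), all terms containing $v$ have non-positive coefficients and can be dropped. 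What survives is a constant plus the single positive term $a\sum_n(\Delta_nW)^2$ with $a=\tfrac{p|\rho|\theta}{4}$.

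Finally I would take expectation over $W$: by independence of the increments, $E\exp(a\sum_n(\Delta_nW)^2)=\prod_n(1-2a h_n)^{-1/2}$, which is finite once $2a\Delta<1$ and whose logarithm converges to $aT$ as $\Delta\to0$, hence stays uniformly bounded. This gives $\limsup_{\Delta\to0}E\exp(p\,x_N)<\infty$ and thus (Int). The main obstacle is the term $\sum_n\sqrt{v_n}\,\Delta_nW$, which on its own carries no sign; the telescoping identity is exactly what turns it into $v$-terms whose signs are controlled by $\rho<0$ plus the harmless quadratic-variation contribution, and the delicate point is choosing $\varepsilon$ so that simultaneously $c<0$ and the Gaussian moment integral converges.
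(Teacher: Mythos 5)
Your proposal is correct and takes essentially the same route as the paper's proof: the telescoping of the drift-implicit Milstein recursion to rewrite $\rho\sum_k\sqrt{v_k}\,\Delta_k W$ and exploit $\rho<0$ to drop the $v$-terms, the conditional-Gaussian computation over $B$ given $W$, the choice of the exponent $p$ with $p(1-\rho^2)\le 1$ (the paper takes $p=1+\rho^2$, i.e.\ $\varepsilon=\rho^2$), and the product/moment-generating-function bound for $\sum_k(\Delta_k W)^2$ are all exactly the paper's steps. The only difference is the (immaterial) order of operations: the paper bounds the $\sqrt{v_k}\,\Delta_k W$ sum first and conditions on $W$ afterwards, whereas you condition first.
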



\subsection{Remarks}

\smallskip
%
%
%

\smallskip

\begin{rmk} 
Theorem  \ref{theorem-main} states that the weak error converges faster than any order $\alpha <1$. For payoffs with compact support in the log-asset price and the volatility we obtain in estimate
\eqref{order_delta} weak convergence order $\alpha=1$. The slightly weaker statement in Theorem  \ref{theorem-main}  is due to the additional use of  tail estimates for the CIR process to avoid the  compact support assumption for the volatility.
\end{rmk}

\begin{rmk}
The weak approximation of the CIR process has been analyzed by Alfonsi in \cite{AA1} and \cite{AA2}. In \cite{AA1}
he shows\,---\,among other results\,---\,that several schemes have weak order one if $f \in C^{4}_{pol}(\mathbb{R}; \mathbb{R}) $ and $\nu \geq 1/2$, respectively $\nu  \geq 1$, depending on the considered scheme.
In \cite{AA2} he constructs second and third order schemes for the CIR process for $f \in C^{\infty}_{pol}(\mathbb{R}; \mathbb{R})$ and without a restriction on the Feller index.

The notation $C^{k}_{pol}(\mathbb{R}; \mathbb{R})$ stands here for the subset of functions of  $C^k(\mathbb{R}; \mathbb{R})$, which have polynomially bounded derivatives up to order $k \in \mathbb{N}$.
\end{rmk}

\smallskip

\begin{rmk}\label{rem-S} Payoffs  in mathematical  finance are typically at most Lipschitz continuous, thus the smoothness conditions of (S) are in general not satisfied. Assumption (S)
arises from using the results from \cite{FP}, see Section 3, which give estimates for the smoothness of the Kolmogorov PDE. In \cite{MA} a weak error analysis for the scheme {\it (D)} has been given by the first author for payoffs  which are 
only bounded and measurable. Weak order one is established there, however the analysis requires the restriction $\n> \frac{9}{2}$ on the Feller index $\nu = \frac{2\kappa \lambda}{\theta^2} $.

A boundedness assumption (which is implied by (S)) for the payoff or assumption (Int) is typical for a convergence rate analysis, since
the Heston model admits moment explosions, i.e.\ $E(S_T^p)=\infty$ for certain parameter constellations and $p>1$, see e.g.\ \cite{And-M}.
\end{rmk}

\smallskip

\begin{rmk}\label{rem-BT}  In a seminal work   Bally and Talay (\cite{Talay}) analyse the weak error of the Euler scheme for test functions (i.e.\ payoffs in our setting), which  are only bounded and measurable.
Using Malliavin calculus techniques they establish a weak error of order one (together with an error expansion) for such test functions, if the considered SDE
has smooth coefficients and
 additionally satisfies a non-degeneracy condition
of H\"ormander type. The latter assumptions are not met for the Heston model.

Kebaier \cite{Keb}  illustrates the necessity of the non-degeneracy condition. He constructs an SDE with smooth coefficients but degenerated support of the law and $C^1$-test functions $f_{\alpha}$ such that the weak error of the Euler scheme
is of exact order $\alpha \in [1/2,1)$.

\end{rmk}
\smallskip

\begin{rmk}\label{rem-Feller} The assumption $\nu >2$ on the Feller index ensures that the inverse of our volatility approximation ${v}_n$
has a finite first moment, which is needed in our error analysis. Note that the inverse of $V_t$, i.e.\ of the CIR process itself, has a finite first moment iff $\nu >1$.

The Feller index controls the probability distribution of $V_t$. The smaller it is, the more likely $V_t$ takes values close to zero. The results given in \cite{AA1,AA2,MA} and here indicate that there
is a tradeoff in the error analysis between the smoothness assumptions on $f$ and  the restriction on the Feller index: the more smoothness on  $f$ is assumed, the smaller is the restriction on $\nu$.
\end{rmk}

\smallskip

\section{Numerical Results}
In this section we will present numerical results which indicate that for the scheme {\it (D)} a weak error rate of order one is typically reached even under milder assumptions than {\it (S)} and {\it (F)} -- as so often when 
a weak and strong error analysis of the CIR process respectively the Heston model is carried out, see e.g. \cite{AA1,AA2,AN,MA}.

We use model parameters from \cite{kimmel} (Model 1) and \cite{BK} (Model 2 and 3):
\bigskip\\
Model 1: $T=2$, $\mu=0$, $\kappa=5.07$, $\lambda=0.0457$, $\theta=0.48$, $\rho=-0.767$, $S_0=100$, $V_0=\lambda$.\smallskip\\
Model 2: $T=1$, $\mu=0.0319$, $\kappa = 6.21$, $\lambda = 0.019$, $\theta = 0.61$, $\rho = -0.7$, $S_0 = 100$, $V_0=0.010201$.\smallskip\\
Model 3: $T=5$, $\mu = 0.05$, $\kappa = 2$, $\lambda = 0.09$, $\theta = 1$, $\rho = -0.3$, $S_0 = 100$, $V_0 = 0.09$.\\

Note that the Feller index is $\nu = 2\kappa\lambda/\theta^2 \approx 2.01$ in the first model, $\nu\approx 0.63$ in Model 2 and $\nu \approx 0.34$ in the third model. In the letter case, our approximations of the CIR process might become negative. Here we replace $\sqrt{v_n}$ by $\sqrt{v_n^+}$ in {\it (D)}.

We use the following functionals, all depending on a parameter $K \in \mathbb{R}$. 
\begin{enumerate}
 \item Put: $f_1(x) = e^{-\mu T}(K-x)^+$.
 \item Smoothed put: $f_2(x) = f_1(x)$ for $x \not\in [0.9\cdot K,1.1 \cdot K]$. Inside the interval $[0.9\cdot K,1.1 \cdot K]$ the function $f_2$ is given by a polynomial whose function values and first, second, and third order derivatives coincide with those of $f_1$ at $0.9\cdot K$ and $1.1\cdot K$.
 \item Indicator: $f_3(x) = e^{-\mu T} 1_{[0,K]}(x)$.
\end{enumerate}

To maximize the influence of the irregularity of the functional we set $K=S_0$.
In order to measure the weak error rate, we have simulated at least $2\cdot 10^7$ samples of $f(S_T^\Delta)$ for each combination of model parameters, functional and number of steps $N \in \{2^0,\dots,2^{8}\}$, where $\Delta=T/N$. The mean of these samples was then compared to a reference solution and the resulting error (depending on $\Delta=T/N$) is plotted in Figures \ref{fig:plot1}-\ref{fig:plot3}. For the put and indicator functionals semi-exact formulae are available and have been used to compute the reference solution. In fact, the put price can be computed from the call price formula given in \cite{heston} and the well-known put-call parity. The price of the digital option can be computed from the probability $P_2$ given in \cite{heston}; it equals $e^{-\mu T}\cdot(1-P_2)$. For the smoothed put such a formula is not available and the reference solution was computed using (at least) $2\cdot 10^7$ samples with $2^{10}$ steps. Each curve is accompanied by a least-squares fit whose slope was used to measure the rate of convergence. The results can be found in Table \ref{tbl:convergence_rates}.

\begin{table}[h] \centering 

\begin{tabular}{l | c | c | c | c}
        & $\nu$    & Smoothed Put & Put & Indicator\\ \hline \hline 
Model 1 & $2.01$ & $0.62$ & $0.58$ & $1.01$ \\ \hline 
Model 2 & $0.63$ & $1.00$ & $0.91$ & $1.02$ \\ \hline 
Model 3 & $0.36$ & $0.96$ & $0.90$ & $0.88$ \\
\end{tabular}

\caption{Measured convergence rates.} \label{tbl:convergence_rates}
\end{table}

\begin{figure}[h] \centering
 \includegraphics[width=0.70\linewidth]{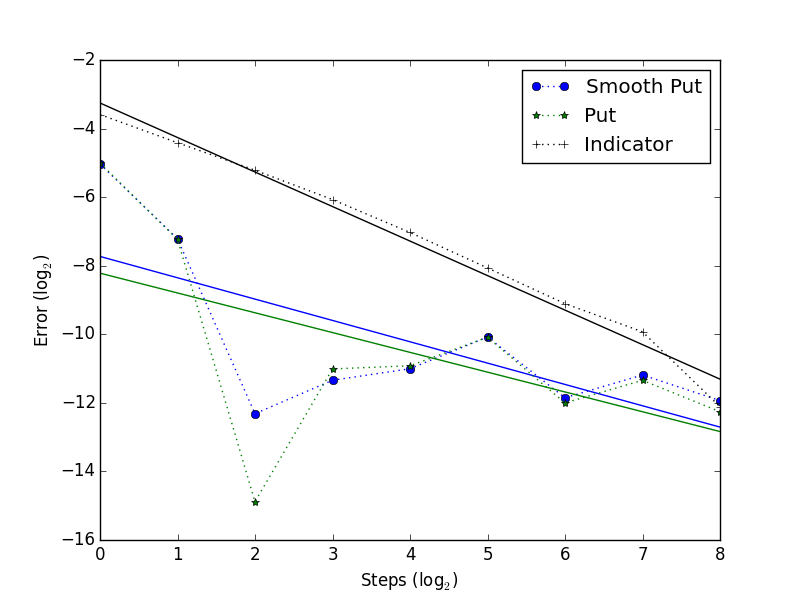}
 \caption{Weak error in Model 1.}   \label{fig:plot1} 
\end{figure}

It turns out that the most regular behavior is obtained in Model 2: For all three functionals the error decays with order one. Because the Feller index is only about $0.63$, this indicates that the assertion of Theorem \ref{theorem-main} also holds under weaker assumptions. In Model 3, which has an even lower Feller index, the error decay is weaker and less regular. Also, the rate now decreases slightly when the functional becomes less smooth.

\begin{figure}[h] \centering
 \includegraphics[width=0.70\linewidth]{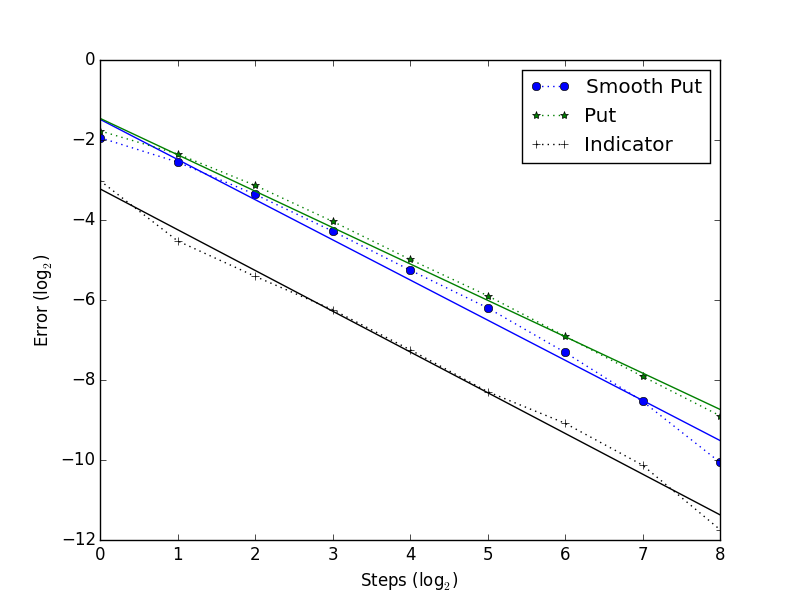}
 \caption{Weak error in Model 2.}
\end{figure}

Model 1 has the highest Feller index $\nu\approx 2.01$, thus satisfies {\it (F)}, and is the only model to fulfill the differentiability assumptions of Theorem \ref{theorem-main}. Surprisingly though, the error of the put functionals decays very irregular in this model and weak order one can only be observed for the indicator functional.
On first thought, this behaviour seems to violate Theorem \ref{theorem-main}. However, a closer look at the error of the put functionals, in particular for $N\in\{2^2,2^3,2^4\}$, reveals that this error is much smaller in Model 1 (approx.\ $2^{-12}$) than in Models 2 and 3 (within $[2^{-8},2^{-2}]$).
A comparison with the indicator functional in Model 1 shows that the reason for the low measured rate is simply the fact that in Model 1 a small number of steps is already sufficient to approximate the put functionals with an astonishingly high precision.

\begin{figure}[h]  \centering 
 \includegraphics[width=0.70\linewidth]{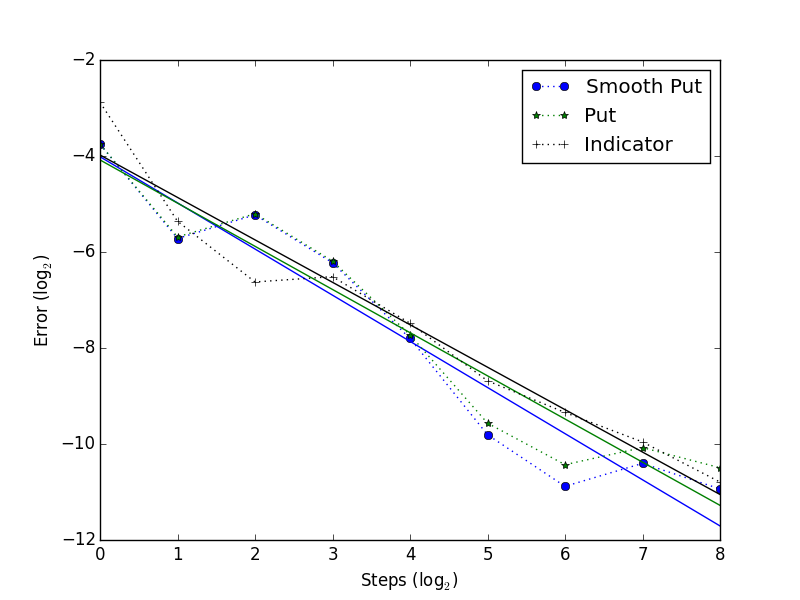}
 \caption{Weak error in Model 3.} \label{fig:plot3}
\end{figure}

\section{Auxiliary Results}
\label{sec;preliminary}

In this section we will collect and establish, respectively, several auxiliary results for the weak error analysis.
Without loss of generality  we can assume in the following $\mu =0$ 
by replacing $f$ with $f(\cdot + \mu T)$. 

\subsection{Kolmogorov PDE}
 In our error analysis we will follow the now classical approach of \cite{TT}, which exploits
the regularity of the Kolmogorov backward equation for
$$u(t,x,v) := E(h(X_{T}^{t,x,v},V_T^{t,v})), \qquad t \in [0,T], \, x \in \mathbb{R}, \, v \geq 0$$
Here
\begin{align*}
 X_s^{t,x,v} &= x -\frac12 \int_t^s V_r^{t,v} \d r +  \int^s_t \sqrt{V_r^{t,v}} \d \big(\rho W_r + \sqrt{1-\rho^2} B_r\big), \quad & s \geq t\\
 V_s^{t,v} &= v+ \int_t^s \kappa (\lambda-V_r^{t,v}) \d r + \theta \int^s_t \sqrt{V_r^{t,v}} \d W_r, \quad & s \geq t
\end{align*}  
and by an application of the Feynman--Kac theorem (see e.g. Theorem 5.7.6 in \cite{KS}) we obtain for $h\colon \mathbb{R} \times [0, \infty) \rightarrow \mathbb{R}$ bounded and continuous that $u$ satisfies
\begin{align} \label{pde1}
\partial_t u(t,x,v)  = & \frac v 2  \partial_x u(t,x,v)  - \kappa(\lambda-v)  \partial_v u(t,x,v)  \\ & \,\, - \frac v 2 \left(  \partial_{xx} u(t,x,v) + 2 \rho\theta  \partial_{xv} u (t,x,v)  + \theta^2 \partial_{vv} u (t,x,v) \right)  \nonumber, \qquad
t \in (0,T), \, x \in \mathbb{R},\,  v > 0 \end{align} with terminal condition
\begin{align} u(T,x,v)=h(x,v), \qquad x \in \mathbb{R}, \, v \geq 0\label{pde2} \end{align}
Due to the presence of the variable $v$ in front of the second order partial derivatives this partial differential equation (PDE) is a degenerate parabolic  equation for which a-priori regularity estimates on $[0,T]\times \mathbb{R} \times [0,\infty)$ have
been only recently established in \cite{FP}. To deal with the degeneracy of the differential operator Feehan and Pop use the cyclodical distance $d_c$ (see e.g.\ \cite{DH}) given by
\begin{align*}
& d_c((t_1,x_1,v_1),(t_2,x_2,v_2)) \\ & \qquad \qquad   \qquad := \frac{|x_1-x_2|+|v_1-v_2|}{\sqrt{v_1}+\sqrt{v_2}+\sqrt{|x_1-x_2|}}+\sqrt{|t_1-t_2|}, \qquad (t_i,x_i,v_i) \in  \mathcal{D}, \, i=1,2 
\nonumber \end{align*}
with $\mathcal{D} \subseteq [0,T]\times \mathbb{R} \times [0, \infty)$
and the Euclidean distance 
\begin{align*}
& d_e((t_1,x_1,v_1),(t_2,x_2,v_2))  \\   & \qquad \qquad   \qquad  := |x_1-x_2|+|v_1-v_2|+ \sqrt{|t_1-t_2|},  \qquad (t_i,x_i,v_i) \in  {\mathcal{D}}, \, i=1,2 \nonumber
\end{align*}
Furthermore set $\mathcal{D}_1= [0,T]\times \mathbb{R} \times [0, 1]$ and   $\mathcal{D}_2= [0,T]\times \mathbb{R} \times [1, \infty)$.
Roughly spoken the main result (Theorem 1.1) of \cite{FP} states that, if the terminal condition is smooth enough, i.e. twice continuously differentiable with   $\varepsilon$-H\"older continuous  second order derivatives, and has compact support, then  the solution $u$ to the Kolmogorov backward PDE has the following properties:
\begin{itemize}
 \item[(i)]  On $\mathcal{D}_2$, i.e.~if $v$ is bounded away from zero, then $u,\partial_t u,\partial_v u,\partial_x u,\partial_{xx} u,\partial_{xv} u$ and $\partial_{vv} u$ are bounded and H\"older continuous of  order $\varepsilon$ with respect to $d_{e}$.
 \item[(ii)] On $\mathcal{D}_1$, i.e.~for $v$ close to zero, then $u,\partial_t u,\partial_v u,\partial_x u$ and the damped second order derivatives  $v \partial_{xx} u,v \partial_{xv} u$ and $v \partial_{vv} u$ are bounded and H\"older continuous of  order $\varepsilon$ with respect to $d_{c}$.
\end{itemize}
For us, it will be sufficient to use the following result, which states a control for the (damped) derivatives of  $u$ and which is a direct consequence of Theorem 1.1 in \cite{FP}.
To state the result, let $M>0$ be sufficiently large and  let $\phi_M \in C^{3}([0, \infty); [0, \infty))$ be  functions such that 
\begin{itemize}
 \item[(i)] $\sup_{v \in (0, \infty)}   \left| \left(\frac{d}{dv}\right)^k\phi_M(v) \right|  \leq 1  $ for $ k \in \{0,1,2,3\}$
 \item[(ii)] $\phi_M(v)=1$ for  $v \leq M$ 
 \item[(iii)] $\phi_M(v)=0$ for $ v \geq 2M$
\end{itemize}

\medskip

\begin{theorem} \label{FP-main} {\it 
 Let $f: \mathbb{R} \rightarrow  \mathbb{R}$  satisfy (S) and let $M>0$ be such that $\{ x \in \mathbb{R}: \, f(x) \neq 0 \} \subset [-2M,2M]$.
 Then, there exist $q >0$ and  $c(f,\varepsilon,q)>0$, which are in particular independent of $M$, such that the solution $u$ to \eqref{pde1} and \eqref{pde2} with
 right hand side $h(x,v)=f(x) \phi_M(v), x \in \mathbb{R}, v \geq 0$ satisfies
\begin{align*} 
 &\sup_{(t,x,v) \in \mathcal{D}_1 \cup \mathcal{D}_2} \big(|u(t,x,v)| + |\partial_t u(t,x,v)| + |\partial_v u(t,x,v)|+   |\partial_x u(t,x,v)| \big) \leq  c(f,\varepsilon,q)(1+M^q) \\
 &\sup_{(t,x,v) \in  \mathcal{D}_1} \big(|v\partial_{xx} u(t,x,v)| + |v\partial_{xv} u(t,x,v)| + |v\partial_{vv} u(t,x,v)|\big) \leq   c(f,\varepsilon,q)(1+M^q) \\
 &\sup_{(t,x,v) \in \mathcal{D}_2} \big(|\partial_{xx} u(t,x,v)| + |\partial_{xv} u(t,x,v)| + |\partial_{vv} u(t,x,v)| \big)  \leq   c(f,\varepsilon,q)(1+M^q)
\end{align*}
}
\end{theorem}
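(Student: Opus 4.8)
\emph{Proof proposal.} The plan is to obtain all three displayed estimates as a direct consequence of Theorem~1.1 in \cite{FP}, the only genuine work being to verify the hypotheses of that theorem for the terminal datum $h(x,v)=f(x)\phi_M(v)$ and, crucially, to track how the resulting constant depends on the cut-off scale $M$.

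First I would check that $h$ meets the smoothness and support requirements of \cite{FP}. Since $f$ satisfies (S) and $\phi_M\in C^3([0,\infty))$ with $\|\phi_M^{(k)}\|_\infty\le 1$ for $k\in\{0,1,2,3\}$, the product $h(x,v)=f(x)\phi_M(v)$ is twice continuously differentiable in $(x,v)$ with $\varepsilon$-H\"older continuous second order derivatives. Indeed, $\partial_{xx}h=f''\phi_M$, $\partial_{xv}h=f'\phi_M'$ and $\partial_{vv}h=f\phi_M''$, and using that bounded Lipschitz functions (such as $\phi_M,\phi_M',\phi_M''$) are globally $\varepsilon$-H\"older, and that sums and products of bounded $\varepsilon$-H\"older functions are again $\varepsilon$-H\"older, the relevant H\"older seminorms of $h$ are controlled by those of $f$ times the uniformly bounded derivatives of $\phi_M$. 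Moreover $\{(x,v):h(x,v)\neq0\}\subseteq[-2M,2M]\times[0,2M]$ is compact by construction. Hence Theorem~1.1 of \cite{FP} applies and yields that $u,\partial_t u,\partial_v u,\partial_x u$ together with the damped second derivatives $v\partial_{xx}u,v\partial_{xv}u,v\partial_{vv}u$ on $\mathcal{D}_1$, and the undamped second derivatives on $\mathcal{D}_2$, are bounded and $\varepsilon$-H\"older continuous with respect to $d_c$, respectively $d_e$. For the present statement only the boundedness part of this conclusion is needed.

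The quantitative heart of the argument is to extract the factor $(1+M^q)$ with a constant $c(f,\varepsilon,q)$ that is independent of $M$. The bound in \cite{FP} controls $u$ and its (damped) derivatives through a norm of the terminal datum carrying polynomial weights in the state variables, these weights being forced by the unbounded domain and by the degeneracy of the operator at $v=0$; equivalently, the estimate constant depends polynomially on the diameter of the support of $h$. I would therefore unwind these weighted norms. The \emph{unweighted} $C^{2,\varepsilon}$ norm of $h$ is bounded uniformly in $M$, because $\|\phi_M^{(k)}\|_\infty\le1$ and the norms of $f$ are fixed; the only source of growth in $M$ is the polynomial weight, which over the support $[-2M,2M]\times[0,2M]$ of $h$ is bounded by $c(1+2M)^q$ for a suitable exponent $q>0$ read off from \cite{FP}. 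Combining these two facts bounds the weighted norm of $h$ by $c(f,\varepsilon,q)(1+M^q)$, and inserting this into the estimate of \cite{FP} produces exactly the three displayed inequalities with a constant depending only on $f,\varepsilon$ and $q$.

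The step I expect to be the main obstacle is precisely this last piece of bookkeeping: the regularity result of \cite{FP} is stated in weighted anisotropic H\"older spaces adapted to the cyclodical distance $d_c$, so one must carefully identify which weighted seminorm of $h$ controls each quantity on the left-hand side, and verify that the associated weight contributes at most a fixed power of $M$ uniformly over $\mathcal{D}_1\cup\mathcal{D}_2$. Everything else, namely the verification of the smoothness and support hypotheses and the passage from H\"older continuity to the required boundedness, is routine.
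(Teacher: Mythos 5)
Your proposal takes essentially the same route as the paper: the paper offers no written proof of this theorem at all, presenting it only as ``a direct consequence of Theorem 1.1 in \cite{FP}'', which is exactly the derivation you spell out (checking that $h(x,v)=f(x)\phi_M(v)$ satisfies the smoothness, H\"older and compact-support hypotheses of that theorem, then tracking how the constant depends on $M$ through the weighted norms to extract the factor $(1+M^q)$). Your verification that the unweighted $C^{2,\varepsilon}$ norms of $h$ are bounded uniformly in $M$ (since $\|\phi_M^{(k)}\|_\infty\le 1$) and that only the polynomial weight over the support $[-2M,2M]\times[0,2M]$ contributes the $M^q$ growth is consistent with the statement and supplies the bookkeeping the paper leaves implicit.
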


\bigskip

\subsection{Malliavin calculus}
To establish our main results, we will use a Malliavin integration by parts procedure, see Lemma \ref{lem:ibp}. Otherwise, we would require stronger smoothness assumptions on the payoffs to obtain a weak convergence order of one,
or would obtain a non-sharp convergence rate. This paragraph gives a short introduction into Malliavin calculus, for more details we refer to \cite{nualart}.

\medskip

Malliavin calculus adds a derivative operator to stochastic analysis. Basically, if $Y$ is a random variable and $(W_t,B_t)_{t\in[0,T]}$ a two-dimensional Brownian motion, then the Malliavin derivative  measures
the dependence of $Y$ on $(W,B)$. The Malliavin derivative is defined by a standard extension procedure:
Let $\mathcal{S}$ be the set of smooth random variables of the form
$$ S= \varphi \left( \int_0^T h_1(s) \d (W_s,B_s), \ldots ,\int_0^T h_k(s) \d (W_s,B_s) \right)$$ with  $ \varphi \in C^{\infty}(\mathbb{R}^{k};\mathbb{R})$ bounded with bounded derivatives,
$h_i \in L^2([0,T]; \mathbb{R}^2)$, $i=1, \ldots, k$, and the stochastic integrals
$$ \int_0^T h_j(s) \d (W_s,B_s)= \int_0^T h_j^{(1)}(s) \d W_s + \int_0^T h_j^{(2)}(s) \d B_s$$
The derivative operator $D$ of such a smooth  random variable is defined as 
$$ D S = \sum_{i=1}^{k} 
\frac{\partial \varphi}{\partial x_{i}} \left( \int_0^T h_1(s) \d (W_s,B_s), \ldots, \int_0^T h_k(s) \d (W_s,B_s) \right)  h_i $$
This operator is closable from $L^{p}(\Omega)$ into $L^{p}\big(\Omega; L^2([0,T]; \mathbb{R}^2) \big)$ and the Sobolev space $\mathbb{D}^{1,p}$ denotes
the closure of  $\mathcal{S}$ with respect to the norm
$$\| Y \|_{1,p} \; = \;  \left(  E |Y|^p  +  {E} \left| \int_0^T |D_s Y|^2 \d s \right|^p \right)^{1/p}$$
In particular, if $D^{W}$ denotes the first component of the Malliavin derivative, i.e.\ the derivative with respect to $W$,
we have  $$D^{W}_t Y = \left \{ \begin{array}{clc} 1_{[0,t]} & \textrm { if } & Y=W \\ 0 & \textrm{ if } &  Y=B\end{array} \right. $$ and vice versa for the derivative with respect to $B$, i.e.
 $$D^{B}_t Y = \left \{ \begin{array}{clc} 1_{[0,t]} & \textrm { if } & Y=B  \\ 0 & \textrm{ if } &  Y=W\end{array} \right. $$  This in particular implies that if $Y \in  \mathbb{D}^{1,2}$ is independent of $W$,
 then $D^{W}Y=0$.

The derivative operator follows rules similar to ordinary calculus. For example, for a random variable 
$Y \in {\mathbb{D}}^{1,p}$ and $g \in C^{1}(\mathbb{R}; \mathbb{R})$ with bounded derivative 
the chain rule reads as 
\begin{eqnarray*}
D g(Y)= g'(Y) \, DY
\end{eqnarray*}
This rule admits also a multidimensional localized version. Assume that 
\begin{itemize}
\item[(i)]  $g \in C^{1}(\mathbb{R}^d; \mathbb{R})$,
\item[(ii)] $Y_1, \ldots, Y_d \in \mathbb{D}^{1,p}$,
 \item[(iii)] $g(Y_1, \ldots, Y_d) \in L^p(\Omega)$,
 \item[(iv)] $ \nabla g(Y_1, \ldots, Y_d) \cdot (DY_1, \ldots, D Y_d) \in L^p(\Omega; L^2([0,T]; \mathbb{R}^2))$, 
\end{itemize}
then the chain rule also holds:  $g(Y) \in \mathbb{D}^{1,p}$ and its derivative is given by 
\begin{align}  \nabla g(Y_1, \ldots, Y_d) \cdot (DY_1, \ldots, D Y_d)
\label{chain_rule_md_2}
\end{align}

The divergence operator $\delta$ is the adjoint of the derivative operator. If a random variable $u \in L^{2} \big(\Omega;L^2([0,T]; \mathbb{R}^2) \big) $
belongs to $\textrm{dom}(\delta)$, the domain of the divergence operator, then  $\delta(u)$ is defined by the duality (also called integration by parts) relationship
\begin{eqnarray}\label{duality}
{ E} [Y \delta(u)]= { E} \left[ \int_0^T  \langle D_s Y,  u_s  \rangle \d s \right] \qquad \textrm{ for all } \quad Y \in {\mathbb D}^{1,2} \end{eqnarray}
If $u$ is adapted to the canonical filtration generated by $(W,B)$ 
and satisfies $E \int_0^T |u_t|^2  \d t <\infty$, then $u \in  \textrm{dom}(\delta)$ and
 $\delta(u)$ coincides with the It\=o integral $\int_0^T u_1(s) \d W_s + \int_0^T u_2(s) \d B_s$.

\bigskip
\subsection{Properties of the CIR process}
We will  need the following estimates for the CIR process, which are well known or can be found in \cite{HK}.

\begin{lemma} {\it \label{CIR_moments} 
(1) We have
$$ E \Big( \sup_{t \in [0,T]} V_t^p \Big) < \infty$$ for all $p \geq 1$ and
$$  \sup_{t \in [0,T]} E V_t^p  < \infty \qquad \textrm{iff} \qquad  p > -\frac{2 \kappa \lambda}{\theta^2}$$
(2) We have
$$ E \exp (p V_T) < \infty \qquad \textrm{iff} \qquad   p < \frac{2 \kappa}{\theta^2} \frac{1}{1-\exp(-\kappa T)} $$
(3) For all $p \geq 1$, there exist constants $C_p >0$ such that
$$ E|V_t -V_s|^p \leq C_p |t-s|^{p/2}, \qquad s,t \in [0,T] $$}
\end{lemma}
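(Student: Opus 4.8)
The plan is to treat these as classical facts and to separate the work into two kinds of arguments: the sharp integrability thresholds in the second half of (1) and in (2), which I would read off from the explicit law of $V_t$, and the moment bounds together with the H\"older estimate (3), which follow from It\^o's formula and the Burkholder--Davis--Gundy (BDG) inequality. Throughout I would use the integral form
\[
V_t = v_0 + \int_0^t \kappa(\lambda - V_s)\,\d s + \theta \int_0^t \sqrt{V_s}\,\d W_s .
\]
The key structural input is that, for fixed $t>0$, $V_t$ is (conditionally on $V_0$) a scaled non-central $\chi^2_{2\nu}$ random variable with scale $c_t = \theta^2(1-e^{-\kappa t})/(4\kappa)$, whose density near the origin behaves like $v^{\nu-1}$ with $\nu = 2\kappa\lambda/\theta^2$; this single fact drives both sharp thresholds.

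For (1) I would first obtain the positive moments. Applying It\^o's formula to $V_t^p$ and using $(\d V_t)^2 = \theta^2 V_t\,\d t$ yields
\[
\d V_t^p = \Big(p\kappa\lambda\, V_t^{p-1} - p\kappa\, V_t^p + \tfrac12 p(p-1)\theta^2 V_t^{p-1}\Big)\,\d t + p\theta\, V_t^{p-1/2}\,\d W_t .
\]
Localizing to control the stochastic integral and taking expectations gives, for $m_p(t)=E V_t^p$, the linear relation $m_p'(t) = -p\kappa\, m_p(t) + \big(p\kappa\lambda + \tfrac12 p(p-1)\theta^2\big) m_{p-1}(t)$, from which $\sup_{t\le T} m_p(t)<\infty$ follows by induction on $p$. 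The supremum moment $E\big(\sup_{t\le T}V_t^p\big)$ is then handled by taking the supremum in the integral equation, bounding the drift term by Jensen and the martingale term by BDG against $\big(\int_0^T\theta^2 V_s\,\d s\big)^{p/2}$, and invoking the bound on $\sup_t m_p(t)$ just established. For the inverse moments I would set $p=-q$ with $q>0$ and use that $E V_t^{-q}<\infty$ exactly when $\int_{0}v^{-q}\cdot v^{\nu-1}\,\d v<\infty$, i.e.\ when $q<\nu$; since for $p\ge 1$ the condition $p>-\nu$ is automatic, this is where the stated ``iff'' has content.

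For (2) I would use the explicit Laplace transform of $V_T$. Since $V_T$ is distributed as $c_T$ times a non-central $\chi^2_{2\nu}$ variable, and the moment generating function of a non-central $\chi^2_d$ random variable equals $(1-2t)^{-d/2}\exp\!\big(\lambda_{nc}\,t/(1-2t)\big)$ for $t<1/2$, the quantity $E\exp(uV_T)$ is finite precisely when $u\,c_T<1/2$, that is when
\[
u < \frac{2\kappa}{\theta^2}\,\frac{1}{1-e^{-\kappa T}} ,
\]
with a genuine pole, hence divergence, at and beyond the threshold.

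Finally, for (3) I would subtract the integral equations at times $s<t$ to obtain $V_t - V_s = \int_s^t \kappa(\lambda - V_r)\,\d r + \theta\int_s^t \sqrt{V_r}\,\d W_r$ and bound the two pieces separately. For the drift, Jensen gives $E\big|\int_s^t\kappa(\lambda-V_r)\,\d r\big|^p \le |t-s|^{p-1}\int_s^t E|\kappa(\lambda - V_r)|^p\,\d r \le C|t-s|^p$; for the martingale, BDG followed by Jensen gives, for $p\ge 2$, $E\big|\theta\int_s^t\sqrt{V_r}\,\d W_r\big|^p \le C_p\,E\big(\int_s^t\theta^2 V_r\,\d r\big)^{p/2}\le C|t-s|^{p/2}$, using $\sup_r E V_r^{p/2}<\infty$ from (1); the range $1\le p<2$ follows from the $p=2$ case via Jensen. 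Since $|t-s|^{p/2}$ dominates $|t-s|^p$ for small $|t-s|$, the surviving rate is $|t-s|^{p/2}$. I expect the main obstacle to be the sharp ``iff'' thresholds in (1) and (2): the It\^o/BDG machinery readily yields finiteness, but establishing divergence at and beyond the thresholds requires the precise law of $V_t$\,---\,the $v^{\nu-1}$ behaviour of the density near zero for the inverse moments, and the explicit pole of the Laplace transform for the exponential moment\,---\,whereas the positive-moment bounds and the H\"older estimate are routine once the uniform moment bounds are in place.
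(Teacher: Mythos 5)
Your proposal is correct in substance, but it takes a different route from the paper for a simple reason: the paper does not prove this lemma at all. It states that the estimates ``are well known or can be found in \cite{HK}'' and moves on, treating them as classical facts about the CIR process. What you have done is reconstruct the standard proofs behind those citations, and your reconstruction identifies exactly the right ingredients: the positive and supremum moments via It\^o's formula, Gronwall-type induction and Burkholder--Davis--Gundy; the sharp thresholds in (1) and (2) via the explicit law of $V_t$ as a scaled noncentral $\chi^2_{2\nu}$ variable, whose density behaves like $v^{\nu-1}$ at the origin (giving $E V_t^{-q}<\infty$ iff $q<\nu$, with divergence at $q=\nu$ since $\int_0 v^{-1}\,\d v=\infty$) and whose moment generating function has a pole at $1/(2c_T)=2\kappa/(\theta^2(1-e^{-\kappa T}))$; and the H\"older estimate (3) by splitting drift and martingale parts, which is the routine argument. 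Two minor technical points you should tighten if you were to write this out in full: first, It\^o's formula applied to $x\mapsto x^p$ is delicate at $x=0$ when $p$ is not an integer $\geq 2$ (the second derivative is singular there, and $V$ can touch zero when $2\kappa\lambda<\theta^2$), so one should argue for integer or even $p$ and interpolate by Jensen, or stop the process away from zero as you briefly indicate; second, the ``iff'' in (1) is a statement about $\sup_{t\in[0,T]}E V_t^p$, and your density argument is pointwise in $t$, so for negative $p$ you still need uniformity as $t\downarrow 0$ (where $c_t\to 0$ but the noncentrality parameter blows up, with $E V_t^{-q}\to v_0^{-q}$), which follows from the explicit noncentral $\chi^2$ formulas but deserves a sentence. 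What the paper's approach buys is brevity, since these facts are indeed standard; what yours buys is a self-contained argument that makes visible where each threshold comes from.
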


\bigskip

\subsection{Properties of the discretization scheme}

We also require several estimates for our discretization of the CIR process. For their and also the subsequent proofs we introduce the following notation:  For a fixed time discretization $0=t_0 < t_1 < \dots <t_N=T$, define $n(t):=\max\{n\in\{0,\dots,N\}  : t_n\leq t\}$, $\eta(t):= t_{n(t)}$ and $\Delta_t = t-\eta(t)$. Our proofs will make use of the following processes:
\begin{align*}
\widetilde{W}_t &:= W_t-W_{\eta(t)} \\
\widetilde{B}_t &:= B_t-B_{\eta(t)} \\
\widehat{x}_t &:= {x}_{n(t)}  -\frac12 {v}_{n(t)} \Delta_t       + \sqrt{{v}_{n(t)}} \Big(\rho \widetilde{W}_t + \sqrt{1-\rho^2} \widetilde{B}_t \Big) \\
\widetilde{v}_t &:= {v}_{n(t)} + \kappa\lambda \Delta_t + \theta\sqrt{{v}_{n(t)}} \widetilde{W}_t + \frac{\theta^2}{4} (\widetilde{W}^2_t-\Delta_t) \\
\widehat{v}_t &:= \frac{1}{1+\kappa\Delta_t} \widetilde{v}_t
\end{align*}
Note that $\lim_{t\nearrow t_n} \widehat{x}_t = \widehat{x}_{t_n} = {x}_n$ and $\lim_{t\nearrow t_n} \widehat{v}_t = \widehat{v}_{t_n} = {v}_n$ and that inside each interval $[t_n,t_{n+1}]$ the processes $\widehat{x}_t$ and $\widetilde{v}_t$ are It\=o processes:
\begin{align*}
\widehat{x}_t &:= {x}_{n(t)} -\frac12 \int_{\eta(t)}^t  {v}_{n(t)} \d s + \int_{\eta(t)}^t \sqrt{{v}_{n(t)}} \;\;\d \Big(\rho {W}_s + \sqrt{1-\rho^2} {B}_s\Big) \\
\widetilde{v}_t &:= {v}_{n(t)} + \int_{\eta(t)}^t \kappa\lambda \;\d s + \int_{\eta(t)}^t \Big(\theta\sqrt{{v}_{n(t)}} +\frac{\theta^2}{2} \widetilde{W}_s\Big) \d {W}_s
\end{align*}

\smallskip
The quantities on which numerical constants depend will be indicated by subscripts. In particular, constants will be independent of the discretization $\{ t_1, \ldots, t_N \}$ unless stated otherwise. 

\medskip

\begin{lemma}  \label{inverse_moments} Let (F-min) be satisfied. (1) For all $p \geq 1$ there exists a constant $C=C_{p, \kappa, \lambda, \theta, v_0, T} >0$ such that
$$ E \Big( \sup_{t \in [0,T]} \widehat{v}_t^p \Big) \leq C $$
(2) For all $p \leq \frac{2 \kappa \lambda}{\theta^2}-1$  there exists a constant $C=C_{p, \kappa, \lambda, \theta, v_0, T} >0$ such that
$$  \sup_{t \in [0,T]} E \big( \widehat{v}_t^{-p} \big) \leq C $$
(3) For all $p \leq \frac{ 2\kappa }{\theta^2}$ there exists a constant $C=C_{p, \kappa, \lambda, \theta, v_0, T} >0$ such that
$$  \sup_{t \in [0,T]} E \exp( p \widehat{v}_t \big) \leq C $$
(4) We have 
$$ {v}_{k} \geq   \frac{1}{1+\kappa T }\Big( \kappa \lambda - \frac{\theta^2}{4} \Big)  (t_k-t_{k-1})  \qquad  \textrm{for }  \quad k=   1, \ldots, N,    $$ 
and, respectively,
$$  \widehat{v}_{t} \geq  \frac{1}{1+\kappa T} \Big ( \kappa \lambda -\frac{\theta^2}{4} \Big) \Delta_t \qquad \textrm{for } \quad t \in [0,T] \setminus \{ t_0, \ldots, t_N \}  $$
(5)  For all $2 \leq q \leq \frac{4 \kappa \lambda}{\theta^2}-2$ there exists a constant $C=C_{q, \kappa, \lambda, \theta, v_0, T} >0$ such that
$$    \sup_{t \in [0,T]}    t^{-q/2}  E \left| \int_0^t  \frac{1}{\sqrt{\widehat{v}_{\eta(u)}}} \d B_u \right|^{q} \leq  C$$
(6) For all $q\geq 1$  there exists a constant $C=C_{q, \kappa, \lambda, \theta,  T} >0$ such that
$$  \sup_{t \in [0,T]} E \left| \frac{\widehat{v}_{\eta(t)}}{\widehat{v}_t} \right|^q \leq C$$

\end{lemma}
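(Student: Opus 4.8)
The backbone of all six claims is the identity, valid for $t\in[t_n,t_{n+1}]$,
$$\widehat{v}_t=\frac{1}{1+\kappa\Delta_t}\Big[\big(\sqrt{v_{n(t)}}+\tfrac{\theta}{2}\widetilde{W}_t\big)^2+\big(\kappa\lambda-\tfrac{\theta^2}{4}\big)\Delta_t\Big],$$
obtained by completing the square in the definition of $\widetilde{v}_t$. Since (F-min) gives $\kappa\lambda-\theta^2/4>0$, the bracket is at least $(\kappa\lambda-\theta^2/4)\Delta_t$ and $1+\kappa\Delta_t\le 1+\kappa T$, so (4) is immediate (the grid version following by letting $t\uparrow t_k$). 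Writing $s^2:=\frac{\theta^2}{4}\Delta_t$, the offset equals $(2\nu-1)s^2$, which explains the role of the Feller index. The decisive structural fact is that, conditionally on $\mathcal{F}_{\eta(t)}$, the quantity $\sqrt{v_{n(t)}}+\frac{\theta}{2}\widetilde{W}_t$ is $\mathcal{N}(\sqrt{v_{n(t)}},s^2)$, so $\widehat{v}_t$ is an affine-scaled, shifted square of one Gaussian and all of its one-step (conditional) moments are explicit. Setting $a:=\sqrt{v_{n(t)}}/s=2\sqrt{v_{n(t)}}/(\theta\sqrt{\Delta_t})$ and $G:=\widetilde{W}_t/\sqrt{\Delta_t}\sim\mathcal{N}(0,1)$, everything reduces to the one Gaussian integral $\psi_\alpha(a):=E\big[((G+a)^2+(2\nu-1))^{-\alpha}\big]$.

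For the positive moments (1) I would avoid the recursion and use that $\widehat{v}$ is a continuous process which, within each step and with matching grid values, solves the It\^o equation with drift $\kappa(\lambda-\widehat{v}_t)/(1+\kappa\Delta_t)$ and diffusion coefficient whose square is at most $\theta^2\widehat{v}_t$; applying It\^o's formula to $\widehat{v}_t^p$, followed by the Burkholder--Davis--Gundy inequality and Gronwall's lemma, yields both $\sup_nEv_n^p$ and $E\sup_t\widehat{v}_t^p$ at once. For the inverse moments (2) and the exponential moments (3) I would instead condition on $\mathcal{F}_{\eta(t)}$ and compute the one-step conditional moment from the Gaussian law, aiming at a grid recursion $E[\Phi(v_{n+1})\mid\mathcal{F}_{t_n}]\le(1+C\Delta_n)\Phi(v_n)$ (with $\Phi(v)=v^{-p}$, resp.\ $\Phi(v)=e^{pv}$ and an additive $C\Delta_n$), then close by discrete Gronwall; the passage from grid points to arbitrary $t$ is a single further conditioning and needs only a crude constant. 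In (3) the Gaussian formula $E[e^{\beta Z^2}\mid\mathcal{F}_{\eta(t)}]=(1-2\beta s^2)^{-1/2}\exp\big(\beta v_{n(t)}/(1-2\beta s^2)\big)$ with $\beta=p/(1+\kappa\Delta_t)$ makes the mechanism transparent: a short computation shows $\beta/(1-2\beta s^2)\le p$ precisely when $p\le 2\kappa/\theta^2$, so in exactly the stated range the exponent does not grow and the prefactor is $1+O(\Delta_t)$.

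Parts (5) and (6) then follow from (2). For (5) I would use that $\widehat{v}_{\eta(u)}=v_{n(u)}$ is measurable with respect to $W$, while $B$ is independent of $W$: conditioning on $W$ makes $\int_0^t v_{n(u)}^{-1/2}\,\d B_u$ centred Gaussian with conditional variance $\int_0^t v_{n(u)}^{-1}\,\d u$, so its $q$-th conditional moment is $C_q\big(\int_0^t v_{n(u)}^{-1}\,\d u\big)^{q/2}$. For $q\ge 2$, Jensen's inequality moves the power inside the normalized time integral, giving $E|\int_0^t v_{n(u)}^{-1/2}\,\d B_u|^q\le C_q\,t^{q/2}\sup_sEv_{n(s)}^{-q/2}$, and the last supremum is finite by (2) exactly when $q/2\le\nu-1$, i.e.\ $q\le\frac{4\kappa\lambda}{\theta^2}-2$. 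For (6), conditioning on $\mathcal{F}_{\eta(t)}$ and using the identity of the first paragraph gives $v_{n(t)}/\widehat{v}_t\le(1+\kappa\Delta_t)\,a^2/((G+a)^2+(2\nu-1))$, so the claim reduces to the \emph{uniform} bound $\sup_{a\ge 0}a^{2q}\psi_q(a)<\infty$; because the offset $2\nu-1$ is strictly positive under (F-min), this holds for every $q$ (split the Gaussian integral according to whether $|G+a|\le a/2$), which is why (6) carries no restriction on $q$.

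The main obstacle is the constant in the one-step recursion for (2). The crude estimate $\psi_p(a)\le C(1+a^2)^{-p}$ that suffices for (6) only gives $E[v_{n+1}^{-p}\mid\mathcal{F}_{t_n}]\le C\,v_n^{-p}$ with a fixed $C>1$, which compounds to $C^N$ and is useless. One has to track $\psi_p$ to second order as $a\to\infty$ (the regime $\Delta_n\to 0$ with $v_n$ bounded below): the expansion $\psi_p(a)=a^{-2p}\big(1+2p(p+1-\nu)/a^2+O(a^{-3})\big)$ yields, after multiplying by $(1+\kappa\Delta_n)^p s^{-2p}$ and using $1/a^2=\theta^2\Delta_n/(4v_n)$, a recursion whose dangerous term is $\frac{p(p+1-\nu)\theta^2}{2}\Delta_n v_n^{-p-1}$: it couples the $p$-th inverse moment to the $(p+1)$-th and has coefficient proportional to $p+1-\nu$, which is nonpositive exactly when $p\le\nu-1$. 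In that range the term may be dropped, leaving $E[v_{n+1}^{-p}\mid\mathcal{F}_{t_n}]\le(1+C\Delta_n)v_n^{-p}$ and hence the Gronwall bound. The genuinely delicate work, which I expect to be the crux, is turning this heuristic expansion into a rigorous one-step inequality valid \emph{uniformly} over all $a\in(0,\infty)$ (not just for large $a$), with explicit control of the remainder and of the small-probability event $\{G\approx-a\}$; this is precisely what pins the threshold at $\nu-1$.
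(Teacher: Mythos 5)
Most of your proposal is sound, and in places it takes a genuinely different route from the paper. Parts (1), (4), (5) and (6) essentially match the paper's arguments: (1) via It\^o/BDG, (4) by completing the square, (5) by reducing to the inverse moment of order $q/2$ (the paper invokes BDG where you condition on $W$; these are equivalent), and (6) by the same split of the Gaussian integral into the regimes $|G|$ small versus $|G+a|$ small, using $2\nu-1>0$. Your treatment of (3) via the noncentral Gaussian moment generating function is a correct and arguably more transparent alternative: the paper instead applies It\^o's formula to $\exp\bigl(p\,\widehat{v}_t\bigr)$ (localized by a stopping time) and uses $p\le 2\kappa/\theta^2$ to absorb the quadratic-variation term into the mean-reversion term, closing with Gronwall; your computation that $\beta/(1-2\beta s^2)\le p$ exactly when $p\le 2\kappa/\theta^2$ reaches the same threshold by a one-line algebraic check.

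The genuine gap is in (2), and you flagged it yourself. Your argument needs the one-step inequality $E\bigl[v_{n+1}^{-p}\mid\mathcal{F}_{t_n}\bigr]\le(1+C\Delta_n)\,v_n^{-p}$ \emph{uniformly} over all values of $v_n$, but the expansion $\psi_p(a)=a^{-2p}\bigl(1+2p(p+1-\nu)/a^2+O(a^{-3})\bigr)$ only covers the regime $v_n\gg\Delta_n$; near the scheme's lower bound $v_n\sim\Delta_n$ (i.e.\ $a$ of order one) neither the expansion nor the crude bound $\psi_p(a)\le C(1+a^2)^{-p}$ gives what is needed, and controlling the remainder uniformly in $a$ is precisely the step you leave open. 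The paper closes exactly this gap by a pointwise sign argument that requires no expansion at all: apply It\^o's formula to $\widetilde{v}_t^{-p}$ inside each step, localized by $\tau_\varepsilon=\inf\{t\ge 0:\widetilde{v}_t=\varepsilon\}$. Since $\widetilde{v}_u=\bigl(\sqrt{v_{n(u)}}+\frac{\theta}{2}\widetilde{W}_u\bigr)^2+\bigl(\kappa\lambda-\frac{\theta^2}{4}\bigr)\Delta_u$, one has the pointwise bound $\bigl(\sqrt{v_{n(u)}}+\frac{\theta}{2}\widetilde{W}_u\bigr)^2\le\widetilde{v}_u$, so the It\^o correction $p(p+1)\frac{\theta^2}{2}\widetilde{v}_u^{-p-2}\bigl(\sqrt{v_{n(u)}}+\frac{\theta}{2}\widetilde{W}_u\bigr)^2$ is dominated by $p(p+1)\frac{\theta^2}{2}\widetilde{v}_u^{-p-1}$, and the total drift of $\widetilde{v}_u^{-p}$ is bounded by $p\bigl((p+1)\frac{\theta^2}{2}-\kappa\lambda\bigr)\widetilde{v}_u^{-p-1}\le 0$ exactly when $p\le\nu-1$. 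Hence $E\widetilde{v}_{t\wedge\tau_\varepsilon}^{-p}\le E\widetilde{v}_{\eta(t)\wedge\tau_\varepsilon}^{-p}$ with no remainder to control and no case distinction in $a$; induction over the subintervals, the factor $(1+\kappa\Delta_t)^p\le e^{p\kappa T}$ to pass from $\widetilde{v}$ to $\widehat{v}$, and Fatou's lemma as $\varepsilon\to 0$ finish the proof. In short: your mechanism and threshold for (2) are right, but the uniform one-step estimate you correctly identify as ``the crux'' is missing, and the clean way to obtain it is the localized It\^o argument rather than the Gaussian expansion.
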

\begin{proof}
Assertion (1) can be shown by straightforward calculations using the Burkholder--Davis--Gundy inequality.

For assertion (2)  let $\varepsilon \in (0,v_0)$ and define $\tau_{\varepsilon}:= \inf \{t \geq 0: \widetilde{v}_t= \varepsilon\}.$ Applying It\=o's lemma,
noting that
$$ \widehat{v}_{\eta(t)}=  v_{n(t)}= \widetilde{v}_{\eta(t)} $$
and taking  expectations give
\begin{align*} E \big( \widetilde{v}_{t \wedge \tau_{\varepsilon}}^{-p} \big) = & E  \big(\widetilde{v}_{\eta(t) \wedge \tau_{\varepsilon}}^{-p} \big)  - p \kappa \lambda E \left(  \int_{\eta(t) \wedge \tau_{\varepsilon}}^{t \wedge \tau_{\varepsilon}} \widetilde{v}_u^{-p-1}\d u\right)
 \\ & \qquad + p(p+1)\frac{\theta^2}{2}  E \left(  \int_{\eta(t) \wedge \tau_{\varepsilon}}^{t \wedge \tau_{\varepsilon}} \widetilde{v}_u^{-p-2}\Big( \sqrt{\widehat{v}_{\eta(u)}} + \frac{\theta}{2} \widetilde{W}_u  \Big)^2\d u\right), \qquad t \in [t_{n(t)},t_{n(t)+1}]
\end{align*}
However, since
\begin{align} \widetilde{v}_t = \Big( \sqrt{\widehat{v}_{\eta(t)}} + \frac{\theta}{2} \widetilde{W}_t  \Big)^2 +  \Big( \kappa \lambda - \frac{\theta^2}{4}\Big)\Delta_t,  \label{vtilde} \end{align}
it follows
$$ \widetilde{v}_u^{-p-2}\Big( \sqrt{\widehat{v}_{\eta(u)}} + \frac{\theta}{2} \widetilde{W}_u  \Big)^2 \leq \widetilde{v}_u^{-p-1} ,$$ 
 and thus we have
 \begin{align*} E \big( \widetilde{v}_{t \wedge \tau_{\varepsilon}}^{-p} \big) \leq E  \big( \widetilde{v}_{\eta(t) \wedge \tau_{\varepsilon}}^{-p} \big)  + p \left( (p+1)\frac{\theta^2}{2} -  \kappa \lambda \right) E \left(  \int_{\eta(t) \wedge \tau_{\varepsilon}}^{t \wedge \tau_{\varepsilon}} \widetilde{v}_u^{-p-1}\d u\right)
\end{align*}
Now $2 \kappa \lambda / \theta^2 \geq p +1$ implies 
\begin{align}  E \big( \widetilde{v}_{t \wedge \tau_{\varepsilon}}^{-p} \big) \leq  E  \big(\widetilde{v}_{\eta(t) \wedge \tau_{\varepsilon}}^{-p} \big) 
= E  \big(\widehat{v}_{\eta(t) \wedge \tau_{\varepsilon}}^{-p} 1_{\{ \tau_{\varepsilon}  \geq \eta(t) \}} \big) + E     \big(\widetilde{v}_{\tau_{\varepsilon}}^{-p} 1_{\{ \tau_{\varepsilon}  < \eta(t) \}} \big)
\label{help_1}
\end{align}  
Let $t < t_1$. Then $\eta(t)=0$ and $$ E \big(\widetilde{v}_{\eta(t) \wedge \tau_{\varepsilon}}^{-p} \big) = v_0^{-p}, $$ thus \eqref{help_1} implies
 $$  \sup_{t \in [0,t_1]} E \big( \widetilde{v}_{t \wedge \tau_{\varepsilon}}^{-p} \big) \leq v_0^{-p} $$
 Hence we have
 $$  \sup_{t \in [0,t_1]} E \big( \widehat{v}_{t \wedge \tau_{\varepsilon}}^{-p} \big) \leq \exp(p\kappa t_1) v_0^{-p} $$
An induction over the discretization subintervals using \eqref{help_1}  now yields
$$   \sup_{t \in [0,T]}  E \big( \widehat{v}_{t \wedge \tau_{\varepsilon}}^{-p} \big)  \leq \exp(p\kappa T) v_0^{-p}    $$
and an application of Fatou's lemma concludes the proof for $\varepsilon \rightarrow 0$.

To prove assertion (3) let $\varepsilon \in (0, v_0^{-1})$ and define $\tau_{\varepsilon}:= \inf \{t \geq 0: \widehat{v}_t= \varepsilon^{-1} \}.$ Applying It\=o's lemma to $(t,v) \mapsto \exp(p \frac{1}{1+ \kappa \Delta_t}v)$ and taking  expectations give
\begin{align*} E \exp(p \widehat{v}_{t \wedge \tau_{\varepsilon}})  = &  E  \exp( p \widehat{v}_{\eta(t) \wedge \tau_{\varepsilon}} \big) \\
& + p \kappa \lambda E \left(  \int_{\eta(t) \wedge \tau_{\varepsilon}}^{t \wedge \tau_{\varepsilon}} \frac{\exp(p  \widehat{v}_u)}{1+ \kappa \Delta_u}
 \d u\right)
 \\ & + p^2\frac{\theta^2}{2}  E \left(  \int_{\eta(t) \wedge \tau_{\varepsilon}}^{t \wedge \tau_{\varepsilon}} \frac{\exp(p\widehat{v}_u)}{(1+ \kappa \Delta_u)^2} \Big( \sqrt{\widehat{v}_{\eta(u)}} + \frac{\theta}{2} \widetilde{W}_u  \Big)^2\d u\right)
  \\ & - p \kappa  E \left(   \int_{\eta(t) \wedge \tau_{\varepsilon}}^{t \wedge \tau_{\varepsilon}} \frac{\exp(p\widehat{v}_u) \widehat{v}_{u}}{1+ \kappa \Delta_u}   \d u\right),  \qquad t \in [t_{n(t)},t_{n(t)+1}]
 \end{align*}
Recall  that
$$ \widehat{v}_t = \frac{1}{1+ \kappa \Delta_t}\Big( \sqrt{\widehat{v}_{\eta(t)}} + \frac{\theta}{2} \widetilde{W}_t  \Big)^2 +   \frac{1}{1+ \kappa \Delta_t} \Big( \kappa \lambda - \frac{\theta^2}{4}\Big)\Delta_t,$$
and thus $p \leq \frac{2 \kappa}{\theta^2}$ implies that
 \begin{align*}  E \exp(p \widehat{v}_{t \wedge \tau_{\varepsilon}})  \leq  &  E  \exp( p \widehat{v}_{\eta(t) \wedge \tau_{\varepsilon}} \big) 
+ p \kappa \lambda   \int_{\eta(t)}^{t  } E \exp(p  \widehat{v}_{u \wedge \tau_{\varepsilon}} )  \d u
\end{align*}
Gronwall's Lemma now yields
$$ E \exp(p \widehat{v}_{t \wedge \tau_{\varepsilon}})  \leq   E  \exp( p \widehat{v}_{\eta(t) \wedge \tau_{\varepsilon}} \big) \exp (\kappa \lambda p \Delta_t) $$
An induction over the discretization subintervals gives
$$ E \exp(p \widehat{v}_{t \wedge \tau_{\varepsilon}})    \leq   \exp( p v_0) \exp (\kappa \lambda p T) $$
and an application of Fatou's lemma concludes the proof for $\varepsilon \rightarrow 0$.

Assertion (4) is a consequence of 
\begin{align*} \widehat{v}_t & =  \frac{1}{1+ \kappa \Delta_t}\left(\sqrt{ \widehat{v}_{\eta(t)}} + \frac{\theta}{2} \widetilde{W}_t \right)^2 + \frac{1}{1+ \kappa \Delta_t} \Big(  \kappa \lambda- \frac{\theta^2}{4} \Big)\Delta_t
\\ & \geq \frac{1}{1+ \kappa T } \Big(  \kappa \lambda- \frac{\theta^2}{4} \Big)\Delta_t
 \end{align*} for $t >0$.

Assertion (5) follows straightforwardly from   (2) and the Burkholder--Davis--Gundy inequality.

For assertion (6) note that it is enough to show that
$$ \sup_{t \in [0,T]} E \left| \frac{\widehat{v}_{\eta(t)}}{\widetilde{v}_{t} } \right|^p \leq C .$$
However, \eqref{vtilde} and the independence of $\widetilde{W}_t$ and $\widehat{v}_{\eta(t)}$ imply that
$$  E \left( \Big| \frac{\widehat{v}_{\eta(t)}}{\widetilde{v}_{t}} \Big|^p  \Big | \widehat{v}_{\eta(t)}= \xi \right) = 
 E \left| \frac{\xi}{(\sqrt{\xi}+  \frac{\theta}{2} \widetilde{W}_t)^2 + c \Delta_t} \right|^p $$
 where   $c=  \kappa \lambda- \frac{\theta^2}{4}$.  
 Now set
 $$ A=  \{ \xi - \theta^{2}  \widetilde{W}_t^2 \geq 0 \}.$$ 
 Since $(a-b)^2 \geq \frac{1}{2}a^2 -b^2$ it follows
 $$ \frac{\xi}{(\sqrt{\xi}+  \frac{\theta}{2} \widetilde{W}_t )^2 + c \Delta_t}  {1}_A  \leq \frac{\xi}{\frac{\xi}{2} -  \frac{\theta^2}{4}  \widetilde{W}_t^2+ c \Delta_t}  {1}_A $$
Now, on $A$ we have 
$$ \frac{\xi}{2} -  \frac{\theta^2}{4}  \widetilde{W}_t^2 \geq \frac{\xi}{4},$$ and we obtain 
 $$  E \left( \left| \frac{\xi}{(\sqrt{\xi}+  \frac{\theta}{2} \widetilde{W}_t)^2 + c \Delta_t} \right|^p {1}_{A} \right)
 \leq  \left| \frac{ \xi}{ \frac{\xi}{4} + c \Delta_t} \right|^p P(A)\leq 4^p$$     
Moreover, on the complementary event we have 
$$ E \left( \left| \frac{\xi}{(\sqrt{\xi}+  \frac{\theta}{2} \widetilde{W}_t)^2 + c \Delta_t} \right|^p {1}_{ \Omega \setminus A } \right)
\leq 2 \left| \frac{\xi}{ c \Delta_t} \right|^p  P \left( W_1 >  \sqrt{ \frac{1}{\theta^2} \frac{\xi}{ \Delta_t }} \right)
$$
Using a standard tail estimate for the Gaussian distribution, i.e.
$$ P(W_1>x) \leq \frac{\exp \left( - x^2 /2 \right)}{x \sqrt{2 \pi}}, \qquad x>0,$$
it follows
$$ E \left( \left| \frac{\xi}{(\sqrt{\xi}+  \frac{\theta}{2} \widetilde{W}_t)^2 + c \Delta_t} \right|^p {1}_{ \Omega \setminus A } \right)
\leq  C \left| \frac{ \xi}{ 2 \theta^2 \Delta_t} \right|^{p-1/2}  \exp\left( - \frac{1}{2\theta^2} \frac{\xi}{\Delta_t} \right)
$$ for some constant $C=C_{p,\kappa, \lambda, \theta} >0$.
But we have
$$ \sup_{y \geq 0} \, y^{p-1/2} \exp(-y) \leq (p-1/2)^{p-1/2} \exp(-p+1/2),  $$
for $p \geq 1$,
and 
therefore
$$ E \left( \left| \frac{\xi}{(\sqrt{\xi}+  \frac{\theta}{2} \widetilde{W}_t)^2 + c \Delta_t} \right|^p {1}_{ \Omega \setminus A } \right) \leq C (p-1/2)^{p-1/2} \exp(-p+1/2)$$
So finally,  we can conclude that there exists a constant $C=C_{p, \kappa, \lambda, \theta}>0$ such that
 $$ E \left( \Big| \frac{\widehat{v}_{\eta(t)}}{\widetilde{v}_{t}} \Big|^p  \Big | \widehat{v}_{\eta(t)}= \xi \right)  \leq  C, $$
which implies that
$$ \sup_{t \in [0,T]} E \Big| \frac{\widehat{v}_{\eta(t)}}{\widetilde{v}_{t}} \Big|^p   \leq  C$$ 
\end{proof}

\bigskip
By  straightforward computations and using the first assertion of the previous Lemma, we also have:
\begin{lemma}  \label{smooth_inc} {\it (1) For all $p \geq 1$, there exists a constant $C=C_{p, \kappa, \lambda, \theta, v_0, T} >0$ such that
$$  E | \widehat{x}_t - \widehat{x}_s|^p \leq C \cdot |t-s|^{p/2}, \qquad s,t \in [0,T]$$
(2) For all $p \geq 1$, there exists a constant $C=C_{p, \kappa, \lambda, \theta, v_0, T} >0$ such that
$$  E | \widehat{v}_t - \widehat{v}_s|^p \leq C \cdot |t-s|^{p/2}, \qquad s,t \in [0,T]$$}
\end{lemma}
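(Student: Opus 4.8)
The plan is to exploit that, although $\widehat{x}$ and $\widehat{v}$ are defined subinterval by subinterval, both have continuous paths on all of $[0,T]$: the stated limit relations $\lim_{t\nearrow t_n}\widehat{x}_t=x_n$ and $\lim_{t\nearrow t_n}\widehat{v}_t=v_n$ show that the piecewise definitions match the scheme values at the grid points. Consequently each is a genuine It\=o process on $[0,T]$ whose increment between two arbitrary times $s<t$ can be written as a single drift integral plus a single stochastic integral, regardless of whether $s$ and $t$ lie in the same discretization subinterval. Once this global representation is in hand, both estimates reduce to the Burkholder--Davis--Gundy (BDG) inequality, H\"older's inequality, and the uniform moment bound $E(\sup_{t\in[0,T]}\widehat{v}_t^{\,p})\le C$ from Lemma \ref{inverse_moments}(1).

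For part (1), on each subinterval one has $d\widehat{x}_t=-\frac12 v_{n(t)}\,dt+\sqrt{v_{n(t)}}\,d(\rho W_t+\sqrt{1-\rho^2}B_t)$, and since these coefficients patch continuously across grid points this yields, for any $s<t$,
$$\widehat{x}_t-\widehat{x}_s=-\frac12\int_s^t v_{n(r)}\,dr+\int_s^t\sqrt{v_{n(r)}}\,d\big(\rho W_r+\sqrt{1-\rho^2}B_r\big).$$
I would bound the drift by $|t-s|\sup_r\widehat{v}_r$ (noting $v_{n(r)}=\widehat{v}_{\eta(r)}\le\sup_r\widehat{v}_r$), giving an $L^p$ bound of order $|t-s|^p\le T^{p/2}|t-s|^{p/2}$; and since $\rho W+\sqrt{1-\rho^2}B$ is a standard Brownian motion, BDG gives $E|\,\cdots\,|^p\le C_p\,E(\int_s^t v_{n(r)}\,dr)^{p/2}\le C_p|t-s|^{p/2}E(\sup_r\widehat{v}_r^{\,p/2})$. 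Combining the two contributions with $(a+b)^p\le 2^{p-1}(a^p+b^p)$ and invoking Lemma \ref{inverse_moments}(1) yields the claim.

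For part (2), applying It\=o's formula to $\widehat{v}_t=\widetilde{v}_t/(1+\kappa\Delta_t)$ on each subinterval (using $d\Delta_t=dt$ there) produces $d\widehat{v}_t=\frac{\kappa(\lambda-\widehat{v}_t)}{1+\kappa\Delta_t}\,dt+\frac{\theta\sqrt{v_{n(t)}}+\frac{\theta^2}{2}\widetilde{W}_t}{1+\kappa\Delta_t}\,dW_t$, which again patches to a global representation by path-continuity. Since $1+\kappa\Delta_t\ge1$, the drift is dominated by $\kappa(\lambda+\widehat{v}_r)$ and the squared diffusion coefficient by $2\theta^2 v_{n(r)}+\frac{\theta^4}{2}\widetilde{W}_r^2$, so the same BDG--H\"older argument applies after bounding $(\int_s^t\sigma_r^2\,dr)^{p/2}\le|t-s|^{p/2}\sup_r|\sigma_r|^p$. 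The one point requiring care is the factor $\widetilde{W}_t=W_t-W_{\eta(t)}$ in the diffusion coefficient: I would control it uniformly in the discretization via $\sup_r|\widetilde{W}_r|\le 2\sup_{u\in[0,T]}|W_u|$, whose $L^p$-norm is finite and, crucially, independent of $\{t_1,\dots,t_N\}$.

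The main obstacle is bookkeeping rather than depth: one must justify that the piecewise It\=o representations of $\widehat{x}$ and $\widehat{v}$ glue into valid global integral representations on $[0,T]$ (so that the increment over $[s,t]$ is treated uniformly whether or not grid points lie in between) and that every moment entering the BDG/H\"older estimates is bounded uniformly in the time discretization. Both are settled by the continuity of $\widehat{x}$ and $\widehat{v}$ at the grid points, by Lemma \ref{inverse_moments}(1), and by the elementary, discretization-free bound on $\sup_r|\widetilde{W}_r|$.
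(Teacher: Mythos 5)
Your proposal is correct and is essentially the paper's own argument: the paper dismisses this lemma with ``by straightforward computations and using the first assertion of the previous Lemma,'' and those computations are precisely what you carry out\,---\,gluing the piecewise It\=o representations of $\widehat{x}$ and $\widehat{v}$ across grid points (justified by the stated continuity $\lim_{t\nearrow t_n}\widehat{x}_t=x_n$, $\lim_{t\nearrow t_n}\widehat{v}_t=v_n$), then applying Burkholder--Davis--Gundy and H\"older together with the uniform moment bound of Lemma \ref{inverse_moments} (1). Your additional care about discretization-independence of the constants, in particular the bound $\sup_r|\widetilde{W}_r|\le 2\sup_{u\in[0,T]}|W_u|$ for the $\widetilde{W}$-term in the diffusion coefficient of $\widehat{v}$, is exactly the right bookkeeping.
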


\bigskip
The next lemma deals with the Malliavin smoothness of our approximation of the log-Heston SDE. Here we use the notation ${\mathbb D}^{1,\infty}= \cap_{p \geq 1} {\mathbb D}^{1,p}$.

\medskip

\begin{lemma} \label{mall-smooth} {\it Let $t \in [0,T]$. Under (F-min) we have
$\widehat{x}_t,\widehat{v}_t \in {\mathbb D}^{1,\infty}$. In particular
$$ D_r^B  \widehat{x}_t = \sqrt{1-\rho^2}\sqrt{ \widehat{v}_{\eta(r)}} 1_{[0,t]}(r), \qquad r,t \in [0,T]$$}
\end{lemma}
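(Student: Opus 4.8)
The plan is to establish membership of the discrete quantities $x_n,v_n$ in $\mathbb{D}^{1,\infty}$ by induction over the grid points, and then transfer this to $\widehat{x}_t,\widetilde{v}_t,\widehat{v}_t$, which are (for fixed $t$) smooth functions of $x_{n(t)},v_{n(t)}$ and the Gaussian increments $\widetilde{W}_t,\widetilde{B}_t$ with deterministic coefficients. The structural observation driving both the membership argument and the explicit formula is that $v_n$ is a function of the increments of $W$ only, whereas $x_n$ depends on the increments of both $W$ and $B$.

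First I would record the closed-form recursion
$$v_{n+1} = \frac{1}{1+\kappa(t_{n+1}-t_n)}\Big(\big(\sqrt{v_n}+\tfrac{\theta}{2}\Delta_n W\big)^2 + \big(\kappa\lambda-\tfrac{\theta^2}{4}\big)(t_{n+1}-t_n)\Big),$$
$$x_{n+1} = x_n + \big(\mu-\tfrac12 v_n\big)(t_{n+1}-t_n) + \sqrt{v_n}\big(\rho\Delta_n W + \sqrt{1-\rho^2}\,\Delta_n B\big).$$
Under (F-min) we have $\kappa\lambda-\theta^2/4>0$ (this is precisely $\nu>1/2$), so Lemma \ref{inverse_moments}(4) yields the deterministic bound $v_k\geq \frac{1}{1+\kappa T}(\kappa\lambda-\theta^2/4)(t_k-t_{k-1})>0$ for $k\geq 1$, while $v_0=v_0>0$. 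Hence on the fixed grid each $v_n$ is bounded away from zero by a positive constant $\delta>0$. This lets me replace the non-smooth map $y\mapsto\sqrt{y}$ by a function $\phi\in C^\infty$ with bounded derivatives that coincides with $\sqrt{\cdot}$ on $[\delta,\infty)$; since $v_n\geq\delta$ almost surely, $\sqrt{v_n}=\phi(v_n)$ and the chain rule applies without any localization subtlety. The induction is then routine: $x_0,v_0$ are constants (base case), and in the step $v_{n+1}$ and $x_{n+1}$ are obtained from $v_n,x_n,\Delta_n W,\Delta_n B\in\mathbb{D}^{1,\infty}$ by addition, multiplication and composition with $\phi$. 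All these variables have moments of every order (by the deterministic positivity and the Gaussianity of the increments, together with Lemma \ref{inverse_moments}(1)), so their products and images under $\phi$ stay in $\mathbb{D}^{1,\infty}$ via \eqref{chain_rule_md_2}. Finally $\widehat{x}_t,\widetilde{v}_t,\widehat{v}_t$ are polynomial-plus-$\phi$ functions of $x_{n(t)},v_{n(t)},\widetilde{W}_t,\widetilde{B}_t$ with deterministic coefficients, hence also in $\mathbb{D}^{1,\infty}$.

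For the explicit derivative I would use that $v_n$ (and thus $\sqrt{v_n}=\phi(v_n)$) and $\widetilde{W}_t$ are independent of $B$, so $D^B v_n=0$, $D^B\sqrt{v_n}=0$ and $D^B\widetilde{W}_t=0$, while $D^B_r\Delta_n B = 1_{(t_n,t_{n+1}]}(r)$ and $D^B_r\widetilde{B}_t = 1_{(\eta(t),t]}(r)$. Differentiating the recursion for $x_{n+1}$ therefore collapses to $D^B_r x_{n+1} = D^B_r x_n + \sqrt{1-\rho^2}\,\sqrt{v_n}\,1_{(t_n,t_{n+1}]}(r)$, and since $\sqrt{v_n}=\sqrt{v_{n(r)}}$ for $r\in(t_n,t_{n+1})$, an induction gives $D^B_r x_{n(t)} = \sqrt{1-\rho^2}\,\sqrt{v_{n(r)}}\,1_{[0,\eta(t)]}(r)$. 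Applying $D^B_r$ to the definition of $\widehat{x}_t$ and using $D^B_r\widetilde{B}_t=1_{(\eta(t),t]}(r)$ together with $v_{n(t)}=v_{n(r)}$ for $r\in(\eta(t),t]$, the two contributions combine into $D^B_r\widehat{x}_t = \sqrt{1-\rho^2}\,\sqrt{v_{n(r)}}\,1_{[0,t]}(r)$; recalling $v_{n(r)}=\widehat{v}_{\eta(r)}$ yields the claimed identity.

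The main obstacle is the non-differentiability of $\sqrt{\cdot}$ at the origin, which would break the chain rule if $v_n$ could vanish. This is exactly what (F-min) rules out: $\nu>1/2$ is equivalent to $\kappa\lambda>\theta^2/4$, which forces the strictly positive deterministic lower bound in Lemma \ref{inverse_moments}(4) and permits the smooth truncation of $\sqrt{\cdot}$ below $\delta$. Everything else is bookkeeping with the recursion and the standard facts that $\mathbb{D}^{1,\infty}$ is stable under products and under composition with smooth functions having bounded derivatives.
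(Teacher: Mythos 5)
Your proposal is correct and follows essentially the same route as the paper's proof: exploit the deterministic positive lower bound on the discretized volatility from Lemma \ref{inverse_moments}(4) (valid on a fixed grid under (F-min)), replace $\sqrt{\cdot}$ by a smooth function with bounded derivative agreeing with it above that bound, run an induction over the grid using the localized chain rule \eqref{chain_rule_md_2} together with moment bounds, and compute $D^B$ via the independence of the volatility and the $W$-increments from $B$. The only cosmetic difference is that you induct on the grid values $x_n,v_n$ and then pass to the interpolated processes, whereas the paper inducts directly on $\widehat{v}_t$ over the subintervals; the mathematical content is identical.
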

\begin{proof}
 (1) We consider first the discretized volatility process. For a fixed discretization $0=t_0<t_1< \ldots < t_N=T$ Lemma \ref{inverse_moments} (4) implies  the existence of a constant $C=C_{\kappa, \lambda, \theta, v_0, t_1, \ldots, t_N}>0$ such that
 $$ \inf_{t \in [0,T]} \widehat{v}_{\eta(t)} \geq  C  $$
 Hence we can write
 \begin{align*} \widehat{v}_t & =  \frac{1}{1+ \kappa \Delta_t}\left(g(\widehat{v}_{\eta(t)}) + \frac{\theta}{2} \widetilde{W}_t \right)^2 + \frac{1}{1+ \kappa \Delta_t} \Big(  \kappa \lambda- \frac{\theta^2}{4} \Big)\Delta_t
 \end{align*} 
 where $g \in C^{1}(\mathbb{R};\mathbb{R})$ with bounded derivative and $g(x)=\sqrt{x}$ for $x \geq C/2$. Now fix $t>0$ and assume that $ \widehat{v}_{\eta(t)} \in {\mathbb D}^{1,\infty}$. Then, the 
 localised chain rule implies that $ \widehat{v}_{t} \in {\mathbb D}^{1,\infty}$, since
  $$D^B \widehat{v}_t=0,$$ 
 due to the independence of
  $W$ and $B$, 
 and 
\begin{align*}
  D_r^W \widehat{v}_t
   &= \frac{2}{1+ \kappa \Delta_t} \Big(g( \widehat{v}_{\eta(t)}) + \frac{\theta}{2} \widetilde{W}_t \Big) \Big( g'( \widehat{v}_{\eta(t)}) D_r^W \widehat{v}_{\eta(t)} + \frac{\theta}{2}  1_{(\eta(t),t]}(r) \Big) 
  \end{align*} 
 by the chain rule \eqref{chain_rule_md_2} and using the boundedness of $g'$ as well as the existence of all moments of $ \sup_{t\in [0,T]} \widehat{v}_t$.
Now, ${v}_0$ is non-random, so we obtain  $\widehat{v}_{t} \in {\mathbb D}^{1,\infty}$  by   induction.

(2) Note that
$$ \widehat{x}_t = \widehat{x}_{\eta(t)} - \frac{1}{2} \widehat{v}_{\eta(t)} (t - \eta(t)) + \rho  g(\widehat{v}_{\eta(t)}) \widetilde{W}_t + \sqrt{1-\rho^2}  g(\widehat{v}_{\eta(t)}) \widetilde{B}_t $$
and 
$$  \widehat{x}_{\eta(t)}= - \frac{1}{2} \sum_{k=0}^{n(t)-1} {v}_k (t_ {k+1}-t_{k}) +  \sum_{k=0}^{n(t)-1} g({v}_k) \Big( \rho ( W_{t_{k+1}}-W_{t_{k}} ) 
+ \sqrt{1-\rho^2} ( B_{t_{k+1}}-B_{t_{k}} ) \Big)
$$
Thus, a direct application of the localised chain rule and the first step give that
$  \widehat{x}_t \in \mathbb{D}^{1,\infty}$ for any $t \in [0,T]$. Moreover, since $D_r^B v_k= D_r^Bg({v}_k)= D_r^B( W_{t_{k+1}}-W_{t_{k}} )=0$ and $g({v}_k)=\sqrt{{v}_k }$ the chain rule also yields
$$ D_r^B  \widehat{x}_{\eta(t)}=   
  \sqrt{1-\rho^2} \sum_{k=0}^{n(t)-1} \sqrt{{v}_k}  1_{({t_{k}},t_{k+1}]}(r)  $$
  and $$  D_r^B  \widehat{x}_{t}= D_r^B  \widehat{x}_{\eta(t)} +  \sqrt{1-\rho^2}  \sqrt{ {v}_{n(t)}}  1_{({t_{n(t)}},t_{n(t)+1}]}(r) $$

\end{proof}

\subsection{Drift-implicit square-root Euler approximation of CIR}
A helpful tool for the proof of Theorem \ref{theorem-weak} will be the  so called  drift-implicit square-root Euler approximation of the CIR process proposed by Alfonsi \cite{AA1}.
This scheme reads as
\begin{align} \label{sqrt_euler} \begin{split}
 a_{k+1} &=  \left( \frac{\sqrt{a_{k}} +   \frac{\theta}{2} \Delta_k W}{2 +\kappa (t_{k+1}-t_k)} +  \sqrt{ \frac{(\sqrt{a_{k}} +  \frac{\theta}{2} \Delta_k W)^2}{(2+ {\kappa} (t_{k+1}-t_k)^2} + \frac{ ( \kappa \lambda - \frac{\theta^2}{4})(t_{k+1}-t_k)}{2 + {\kappa} (t_{k+1}-t_k)} } \right)^2,  \\
 a_0 & = v_0, \end{split}
\end{align}
 and is well defined and positive under {\it (F-min)}, i.e.\ $\frac{4 \kappa \lambda}{\theta^2} \geq 1$. It arises by discretizing the Lamperti-transformed process $A_t=\sqrt{V_t}$, $t \in [0,T]$,
with  a drift-implicit Euler scheme, and transforming back.

Strong convergence rates for this scheme have been established for $\frac{2 \kappa \lambda}{\theta^2} >1$ in
\cite{DNS,AA3,NS}.
The recent work \cite{HJN} performs a convergence analysis under {\it (F-min)}. The authors establish $L^p$-convergence rates for \eqref{sqrt_euler} in the case of an equidistant discretization.  Using Corollary 3.9 in \cite{HJN} and Lemma \ref{CIR_moments} (1) and (3) we obtain $L^1$-convergence 
 without a rate for general discretizations,  i.e.\ it holds
 \begin{align} \label{hjn_help}\lim_{\Delta \rightarrow 0} \,  E \sup_{k=0, \ldots, N} |\sqrt{a_k} -\sqrt{V_{t_k}} |=0  \end{align}
 under {\it (F-min)}.

\medskip

Note that the drift-implicit Milstein scheme dominates the square-root Euler approximation:
\begin{align} v_k \geq a_k, \qquad k=0,1,2, \ldots \label{dom_sqrt} \end{align} To see this, set
\begin{align*}
 a_{k+1}^x=   \left( \frac{\sqrt{x} +   \frac{\theta}{2} \Delta_k W}{2 +\kappa (t_{k+1}-t_k))} +  \sqrt{ \frac{(\sqrt{x} +  \frac{\theta}{2} \Delta_k W)^2}{(2+ {\kappa} (t_{k+1}-t_k))^2} + \frac{ ( \kappa \lambda - \frac{\theta^2}{4})(t_{k+1}-t_k)}{2 + {\kappa} (t_{k+1}-t_k)} } \right)^2,
\end{align*}
and
\begin{align*}
\qquad {v}_{k+1}^x &= x + \kappa(\lambda-v_{k+1}^x) (t_{k+1}-t_k) + \theta \sqrt{x} \Delta_n W + \frac{\theta^2}{4} \big((\Delta_k W)^2 - (t_{k+1}-t_k)   \big)
\end{align*} with $x \geq 0$.
From \cite{AA1} it is known that $a_{k}^x$ is increasing in $x$ for all $x \geq 0$, $k\in \mathbb{N}$.
Since 
$$a_{k+1}^x=v_{k+1}^x -  \frac{1}{1+\kappa (t_{k+1}-t_k)} \left(  \frac{4 \kappa \lambda - \theta^2}{8 \sqrt{a_{k+1}^x}} - \frac{\kappa}{2} \sqrt{a_{k+1}^x} \right)^2 (t_{k+1}-t_k)^2, $$
an induction gives \eqref{dom_sqrt}.

Using this domination property and Lemma \ref{inverse_moments} (1) we obtain
\begin{align} \label{bound_ak} E \sup_{k=0, \ldots, N} |a_k|^p < \infty \end{align}
for all $p \in \mathbb{N}$. Since moreover
$$ | a_k - V_{t_k}|^p =   | \sqrt{a_k} - \sqrt{V_{t_k}}|^p  \cdot |\sqrt{a_k} + \sqrt{V_{t_k}}|^p \leq |\sqrt{a_k} - \sqrt{V_{t_k}}|^{1/(1+\varepsilon)}  \cdot    |\sqrt{a_k} + \sqrt{V_{t_k}}|^{2p - 1/(1+ \varepsilon)}$$
we have
$$  | a_k - V_{t_k}|^p \leq   |\sqrt{a_k} - \sqrt{V_{t_k}}|^{1/(1+\varepsilon)}  \cdot C_{p,\varepsilon} \big( 1 + a_k^{2p - 1/(1+ \varepsilon)}  + V_{t_k}^{2p - 1/(1+ \varepsilon)} \big)  $$ 
for some constant $C_{p,\varepsilon}>0$. Now
estimates \eqref{hjn_help}, \eqref{bound_ak}, Lemma \ref{CIR_moments}  (1)  and H\"older's inequality  give
\medskip

\begin{lemma}\label{con_sqrt}
 Let $p \geq 1$. Under {\it (F-min)}, we have
 $$  \lim_{\Delta \rightarrow 0}  \, \sup_{k=0, \ldots, N}  E   |a_k -V_{t_k} |^p = 0 $$
\end{lemma}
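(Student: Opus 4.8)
The plan is to deduce the claim from the $L^1$-convergence of the Lamperti-transformed scheme recorded in \eqref{hjn_help}, which only controls $\sqrt{a_k}-\sqrt{V_{t_k}}$ and comes without a rate. The whole difficulty is therefore algebraic rather than probabilistic: one has to pass from the square roots to $a_k-V_{t_k}$ while taming the factor $\sqrt{a_k}+\sqrt{V_{t_k}}$, which can be large precisely where the two processes are large.

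First I would use the positivity of both $a_k$ (guaranteed by the construction \eqref{sqrt_euler} under (F-min)) and $V_{t_k}$ to factor
$$ |a_k - V_{t_k}|^p = |\sqrt{a_k}-\sqrt{V_{t_k}}|^{p}\, |\sqrt{a_k}+\sqrt{V_{t_k}}|^{p}. $$
Since $|\sqrt{a_k}-\sqrt{V_{t_k}}| \le \sqrt{a_k}+\sqrt{V_{t_k}}$, I would keep only a small power $1/(1+\varepsilon)$ of the genuinely small factor and absorb the remaining power $p-1/(1+\varepsilon)$ into the sum, obtaining
$$ |a_k - V_{t_k}|^p \le |\sqrt{a_k}-\sqrt{V_{t_k}}|^{1/(1+\varepsilon)}\, |\sqrt{a_k}+\sqrt{V_{t_k}}|^{\,2p-1/(1+\varepsilon)}, $$
and then bound the last factor by $C_{p,\varepsilon}\big(1+a_k^{2p-1/(1+\varepsilon)}+V_{t_k}^{2p-1/(1+\varepsilon)}\big)$.

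Next I would apply Hölder's inequality with the conjugate exponents $1+\varepsilon$ and $(1+\varepsilon)/\varepsilon$. Raising the first factor to $1+\varepsilon$ returns exactly $|\sqrt{a_k}-\sqrt{V_{t_k}}|$, whose supremum in $k$ of the expectation is dominated by $E\sup_k|\sqrt{a_k}-\sqrt{V_{t_k}}|$ and hence vanishes as $\Delta\to 0$ by \eqref{hjn_help}; raising the second factor to $(1+\varepsilon)/\varepsilon$ produces only finite positive moments of $a_k$ and $V_{t_k}$, which are bounded uniformly in $k$ and in $\Delta$ by \eqref{bound_ak} and Lemma \ref{CIR_moments} (1). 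Taking the supremum over $k$ yields
$$ \sup_{k} E|a_k-V_{t_k}|^p \le C \Big(\sup_k E|\sqrt{a_k}-\sqrt{V_{t_k}}|\Big)^{1/(1+\varepsilon)}, $$
and letting $\Delta\to 0$ closes the argument. The only point requiring (a trivial amount of) care is the choice of $\varepsilon$: any $\varepsilon>0$ is admissible precisely because all positive moments of both $a_k$ and $V_{t_k}$ are finite and uniformly controlled, so the Hölder trade-off never costs integrability.
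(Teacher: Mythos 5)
Your proof is correct and follows essentially the same route as the paper: the identical factorization $|a_k-V_{t_k}|^p=|\sqrt{a_k}-\sqrt{V_{t_k}}|^p\,|\sqrt{a_k}+\sqrt{V_{t_k}}|^p$, the same exponent split retaining only the power $1/(1+\varepsilon)$ of the small factor, and the same H\"older step combining \eqref{hjn_help}, \eqref{bound_ak} and Lemma \ref{CIR_moments} (1). Nothing is missing; your remark on the uniformity in $k$ via $E\sup_k|\sqrt{a_k}-\sqrt{V_{t_k}}|$ is exactly how the paper's cited estimate is used.
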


\medskip

\section{Proof of the Main Results}

\subsection{Proof of Theorem \ref{theorem-main}}

Following \cite{TT} we write the weak error as telescoping sum of local errors, i.e.
$$|E(h({x}_N,v_N))-E(h(X_T,V_T))| = \left|\sum_{n=1}^N E\big(u(t_n, {x}_n, {v}_n) - u(t_{n-1},{x}_{n-1}, {v}_{n-1})\big)\right|$$ 
where $h(x,v)=f(x) \phi_M(v)$ with $f$ satisfying $(S)$ and the localizing function $\phi_M$ from Theorem \ref{FP-main}.

Next we expand the local errors using the It\=o formula and the function $$ \widetilde{u}(t,x,v):=u(t,x,v/(1+\kappa\Delta_t)), \qquad t\in [0,T], x \in \mathbb{R}, v \geq 0$$ For brevity we will often omit the arguments of $\widetilde{u}(t,\widehat{x}_t, \widetilde{v}_t)$ and $u(t,\widehat{x}_t,\widehat{v}_t)$.
We have
\begin{align*}
e_n&:=E\big(u(t_{n+1}, {x}_{n+1}, {v}_{n+1}) - u(t_n,{x}_n, {v}_n)\big) \\
&= E\big(\widetilde{u}(t_{n+1},\widehat{x}_{t_{n+1}}, \widetilde{v}_{t_{n+1}}) - \widetilde{u}(t_n,\widehat{x}_{t_n}, \widetilde{v}_{t_n})\big) \\
&= \int_{t_n}^{t_{n+1}} E\left[ \partial_t \widetilde{u}(t,\widehat{x}_t, \widetilde{v}_t) - \frac12 {v}_n \partial_x \widetilde{u} + \kappa\lambda \partial_v \widetilde{u} + \frac12 {v}_n \partial_{xx} \widetilde{u}\right.\\
&\qquad+\left.\sqrt{{v}_n} \rho\theta \left(\sqrt{{v}_n}+\frac{\theta}{2} \widetilde{W}_t\right) \partial_{xv}\widetilde{u} + \frac{\theta^2}{2} \left(\sqrt{{v}_n} + \frac{\theta}{2}\widetilde{W}_t\right)^2 \partial_{vv} \widetilde{u} \right] \; \d t
\end{align*}
The derivatives of $\widetilde{u}$ can be written in terms of derivatives of $u$:
\begin{align*}
 \partial_t\widetilde{u}(t,\widehat{x}_t,\widetilde{v}_t) &= \partial_t u(t,\widehat{x}_t,\widehat{v}_t) - \frac{\kappa \widehat{v}_t}{1+\kappa\Delta_t}\cdot \partial_v u(t,\widehat{x}_t,\widehat{v}_t)  \\
\frac{\partial^{k+l}}{\partial x^k\partial v^l} \widetilde{u}(t,\widehat{x}_t,\widetilde{v}_t) &= \frac{1}{(1+\kappa\Delta_t)^l} \cdot \frac{\partial^{k+l}}{\partial x^k\partial v^l} u(t,\widehat{x}_t,\widehat{v}_t)
\end{align*}

Using $(\sqrt{{v}_n}+ \frac{\theta}{2}\widetilde{W}_t)^2 = \widetilde{v}_t - (\kappa\lambda-\theta^2/4)\Delta_t$ and the Kolmogorov-backward PDE for $u$, i.e.
$$\partial_t u = \frac12 v \partial_x u - \kappa(\lambda-v) \partial_v u - \frac12 v \partial_{xx} u - \rho\theta v \partial_{xv} u  - \frac{\theta^2}{2} v \partial_{vv} u,$$
we can write the local error expansion as
\begin{align*}
  e_n &= \int_{t_n}^{t_{n+1}} E\left[ \frac12 (\widehat{v}_t - {v}_n) \partial_x u + \kappa\lambda \left(\frac{1}{1+\kappa\Delta_t}-1\right) \partial_v u + \kappa\left(1-\frac{1}{1+\kappa\Delta_t}\right) \widehat{v}_t \partial_v u \right. \\
&\qquad + \frac12 \left({v}_n - \widehat{v}_t\right) \partial_{xx} u + \rho\theta \left(\frac{{v}_n}{1+\kappa\Delta_t} - \widehat{v}_t\right) \partial_{xv} u + \frac{\theta^2\rho}{2} \sqrt{{v}_n} \widetilde{W}_t \frac{1}{1+\kappa\Delta_t} \partial_{xv} u \\
&\left.\qquad + \frac{\theta^2}{2}\left(\frac{1}{1+\kappa\Delta_t}-1\right)\widehat{v}_t \partial_{vv} u - \frac{\Delta_t\theta^2}{2(1+\kappa\Delta_t)^2} \left(\kappa\lambda-\frac{\theta^2}{4}\right) \partial_{vv}u
 \right] \d t
\end{align*}
In the next step we use the identities
\begin{align*}
\frac{{v}_n}{1 + \kappa \Delta_t} - \widehat{v}_t &=  \frac{1}{1+ \kappa \Delta_t} ({v}_n - \widetilde{v}_t),\\
\widehat{v}_t-{v}_n & = \widetilde{v}_t-{v}_n-\kappa\Delta_t \widehat{v}_t = \kappa\Delta_t(\lambda-\widehat{v}_t) + \theta \sqrt{{v}_n}\widetilde{W}_t + \frac{\theta^2}{4}(\widetilde{W}_t^2 - \Delta_t),
\end{align*}
and after regrouping the terms we end up with
\begin{align*}
e_n &= e_n^{(1)} + e_n^{(2)} + e_n^{(3)},
\end{align*}
where
\begin{align*}
e_n^{(1)}&=\int_{t_n}^{t_{n+1}} \Delta_t \cdot E\left[\frac{\kappa^2}{1+\kappa\Delta_t} (\widehat{v}_t-\lambda) \partial_v u - \frac{\theta^2}{2(1+\kappa\Delta_t)} \left(\kappa \widehat{v}_t + \frac{4\kappa\lambda- \theta^2}{4(1+\kappa\Delta_t)} \right) \partial_{vv} u\right.\\
&\qquad\qquad\qquad\qquad+\left. \frac{\kappa}{2} (\lambda-\widehat{v}_t) (\partial_x u -\partial_{xx} u) - \frac{\rho\theta\kappa\lambda}{1+\kappa\Delta_t} \partial_{xv} u\right] \d t, \\
e_{n}^{(2)}&= \int_{t_n}^{t_{n+1}} E\left[\sqrt{{v}_n} \widetilde{W}_t \left(\frac{\theta}{2} \partial_x u - \frac{\theta}{2} \partial_{xx} u - \frac{\rho\theta^2}{2(1+\kappa\Delta_t)} \partial_{xv} u \right)\right] \d t,\\
e_n^{(3)}&= \int_{t_n}^{t_{n+1}} E\left[ (\widetilde{W}_t^2 - \Delta_t) \cdot \left(\frac{\theta^2}{8} \partial_x u - \frac{\theta^2}{8} \partial_{xx} u - \frac{\theta^3\rho}{4(1+\kappa\Delta_t)} \partial_{xv} u \right)\right] \d t
\end{align*}

Now Theorem \ref{FP-main} implies that 
\begin{align} \label{deriv-1}
| \partial _t u(t,x, v)| +   | \partial_x u(t,x, v)| +  |\partial_v u(t,x, v)|  \leq  c(f,\varepsilon,q)(1+M^q), \qquad t \in [0,T], \, x \in \mathbb{R}, \, v \geq 0 \end{align}
 and
\begin{align}
 \label{deriv-2} & | \partial_{xx} u(t,x, v)| +  | \partial_{xv} u(t,x, v)|  +  | \partial_{vv} u(t,{x}, v)|  \\ & \qquad \qquad \qquad \qquad \qquad \qquad \qquad  \leq  c(f,\varepsilon,q)(1+M^q) \Big (1+ \frac{1}{{v}} \Big), \qquad t \in [0,T], \, x \in \mathbb{R}, \, v > 0 \nonumber
\end{align}
 In the following we denote by $c$ constants, which only depend on $c(f,\varepsilon,q)$, $\kappa$, $\lambda$, $\theta$, $\rho$, $T$, $x_0$, $v_0$ regardless of their value.
Using equations \eqref{deriv-1} and \eqref{deriv-2} we obtain
\begin{align*} 
 |e_n^{(1)}| \leq c  \left( \Delta^2 + E \int_{t_n}^{t_{n+1}}  \Big( \widehat{v}_t + \frac{1}{\widehat{v}_t}  \Big) \Delta_t \d t \right)  (1+M^q)
\end{align*}
and
\begin{align*} 
 |e_n^{(3)}| \leq c  \left( \Delta^2 +  E \int_{t_n}^{t_{n+1}}   \frac{1}{\widehat{v}_t} |\widetilde W_t^2 - \Delta_t|  \d t \right)(1+M^q)
\end{align*}
Since
$$ \sup_{t \in [0,T]} \left( E \left| \widehat{v}_t \right|^{p}\right)^{1/p}  +  \sup_{t \in [0,T]} \left( E \left| \frac{1}{\widehat{v}_t}  \right|^{1+\delta}\right)^{1/(1+\delta)} \leq c$$
for all $p \geq 1$ and $\delta \in \left(0, \frac{2\kappa \lambda}{\theta^2}-2 \right)$
by Lemma \ref{inverse_moments} (1), (2), we have
\begin{align} \label{en_1_final}
 |e_n^{(1)}|+ |e_n^{(3)}| &\leq c (1+M^q) \Delta^2
\end{align}

To deal with $e_n^{(2)}$ we will carry out an integration by parts first, which is summarized in the following lemma. Estimating this term directly would only give a bound of order $\sqrt{\Delta}$.

\medskip

\begin{lemma} \label{lem:ibp} {\it {
Let $t > 0$,  $g \in C^{(0,1,1)} ( [0,T] \times \mathbb{R} \times [0, \infty); \mathbb{R})$ be bounded and such that
$$   \int_0^T E\left | D_r \left( \sqrt{\widehat{v}_{\eta(t)}} \widetilde{W}_t g(t,\widehat{x}_t, \widehat{v}_t)  \right) \right |^2  \d r 
< \infty$$
 Then we have
 $$ E\left[\sqrt{\widehat{v}_{\eta(t)}} \widetilde{W}_t \partial_x g(t,\widehat{x}_t, \widehat{v}_t) \right] =   \frac{1}{ t \sqrt{1-\rho^2} } E\left[ \sqrt{\widehat{v}_{\eta(t)}} \widetilde{W}_t  g(t,\widehat{x}_t, \widehat{v}_t)
 \int_0^t \frac{1}{\sqrt{\widehat{v}_{\eta(r)}}} \d B_r
 \right]$$}}
\end{lemma}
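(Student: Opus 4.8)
The plan is to derive the identity as a single Malliavin integration by parts with respect to the $B$-component of the driving noise, exploiting that among the three factors $\sqrt{\widehat v_{\eta(t)}}$, $\widetilde W_t$ and $g(t,\widehat x_t,\widehat v_t)$ only $\widehat x_t$ carries any $B$-dependence. Concretely, I would set
\[
Y := \sqrt{\widehat v_{\eta(t)}}\,\widetilde W_t\, g(t,\widehat x_t,\widehat v_t),
\qquad
u_s := \frac{1}{\sqrt{\widehat v_{\eta(s)}}}\,1_{[0,t]}(s),
\]
regarding $u$ as the $B$-component of an $L^2([0,T];\mathbb R^2)$-valued integrand (with vanishing $W$-component), and apply the duality relation \eqref{duality} to the pair $(Y,u)$.

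First I would check that $u\in\mathrm{dom}(\delta)$ and that $\delta(u)$ coincides with the It\=o integral $\int_0^t \widehat v_{\eta(s)}^{-1/2}\,\d B_s$. The integrand $u_s$ is $\mathcal F_{\eta(s)}$-measurable, hence adapted, and by Lemma \ref{inverse_moments}(4) it is bounded on each subinterval; square integrability $E\int_0^t u_s^2\,\d s<\infty$ follows from Lemma \ref{inverse_moments}(2). Thus the adaptedness criterion stated after \eqref{duality} applies. Next I would verify $Y\in\mathbb D^{1,2}$: boundedness of $g$ together with $\widetilde W_t,\sqrt{\widehat v_{\eta(t)}}\in L^2$ gives $Y\in L^2(\Omega)$, while $\widehat x_t,\widehat v_t\in\mathbb D^{1,\infty}$ by Lemma \ref{mall-smooth} and the assumed bound $\int_0^T E|D_r Y|^2\,\d r<\infty$ supply the Malliavin differentiability.

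The key computation is then $D_r^B Y$. Since $\sqrt{\widehat v_{\eta(t)}}$, $\widetilde W_t=W_t-W_{\eta(t)}$ and $\widehat v_t$ are all measurable with respect to $W$ alone, their $B$-derivatives vanish (as noted in the proof of Lemma \ref{mall-smooth}, $D^B\widehat v_t=0$), so the localized chain rule \eqref{chain_rule_md_2} leaves only the $x$-slot:
\[
D_r^B Y
= \sqrt{\widehat v_{\eta(t)}}\,\widetilde W_t\,\partial_x g(t,\widehat x_t,\widehat v_t)\, D_r^B\widehat x_t
= \sqrt{1-\rho^2}\,\sqrt{\widehat v_{\eta(t)}}\,\widetilde W_t\,\partial_x g(t,\widehat x_t,\widehat v_t)\,\sqrt{\widehat v_{\eta(r)}}\,1_{[0,t]}(r),
\]
the last equality being Lemma \ref{mall-smooth}. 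Inserting this into $E\big[\int_0^T D_r^B Y\, u_r\,\d r\big]$, the factors $\sqrt{\widehat v_{\eta(r)}}$ and $1/\sqrt{\widehat v_{\eta(r)}}$ cancel, the $r$-integral of $1_{[0,t]}$ produces the factor $t$, and \eqref{duality} yields
\[
E\!\left[Y\int_0^t \frac{1}{\sqrt{\widehat v_{\eta(r)}}}\,\d B_r\right]
= t\sqrt{1-\rho^2}\;E\!\left[\sqrt{\widehat v_{\eta(t)}}\,\widetilde W_t\,\partial_x g(t,\widehat x_t,\widehat v_t)\right];
\]
dividing by $t\sqrt{1-\rho^2}$ is exactly the asserted formula.

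The main obstacle is not the algebra\,---\,the cancellation $\sqrt{\widehat v_{\eta(r)}}\cdot\widehat v_{\eta(r)}^{-1/2}=1$ is precisely what produces the clean weight\,---\,but the justification that all objects live in the right spaces: that $Y\in\mathbb D^{1,2}$, where the hypothesized finiteness of $\int_0^T E|D_r Y|^2\,\d r$ is used directly, and that $u\in\mathrm{dom}(\delta)$ with $\delta(u)$ equal to the It\=o integral, which rests on the positivity lower bound of Lemma \ref{inverse_moments}(4) and the inverse-moment bound of Lemma \ref{inverse_moments}(2). Some care is also needed because $g$ is only $C^{(0,1,1)}$, so the localized version of the chain rule \eqref{chain_rule_md_2} must be invoked in place of the plain one.
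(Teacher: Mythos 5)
Your proposal is correct and follows essentially the same route as the paper's proof: compute $D_r^B\big(\sqrt{\widehat v_{\eta(t)}}\,\widetilde W_t\,g(t,\widehat x_t,\widehat v_t)\big)$ via the (localized) chain rule, using that only $\widehat x_t$ depends on $B$ together with $D_r^B\widehat x_t=\sqrt{1-\rho^2}\sqrt{\widehat v_{\eta(r)}}\,1_{[0,t]}(r)$ from Lemma \ref{mall-smooth}, and then apply the duality \eqref{duality} with the adapted integrand $u_r=\big(0,\widehat v_{\eta(r)}^{-1/2}1_{[0,t]}(r)\big)$ so that the weights cancel and the $r$-integral yields the factor $t$. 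Your additional explicit verification that $Y\in\mathbb D^{1,2}$ and $u\in\mathrm{dom}(\delta)$ (via Lemma \ref{inverse_moments}(4)) only makes precise what the paper defers to the point where the lemma is applied.
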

\begin{proof}{
Because $\widetilde{W}$ and  $\widehat{v}$ are independent of $B$, the chain rule of Malliavin calculus implies  that $$D_r^{B}\Big( \sqrt{\widehat{v}_{\eta(t)}} \widetilde{W}_t  g(t,\widehat{x}_t, \widehat{v}_t) \Big)  =  \sqrt{\widehat{v}_{\eta(t)}}  \widetilde{W}_t \partial_x g(t,\widehat{x}_t,\widehat{v}_t) \cdot D_r^B \widehat{x}_t$$ with $D_r^B \widehat{x}_t = \sqrt{1-\rho^2}\sqrt{\widehat{v}_{\eta(r)}} {1}_{[0,t]}(r)$, see Lemma \ref{mall-smooth}.
Applying the integration by parts formula \eqref{duality} to 
$$ D_r Y= \left( \begin{array}{c} D^W_r \big( \sqrt{\widehat{v}_{\eta(t)}} \widetilde{W}_t  g(t,\widehat{x}_t, \widehat{v}_t) \big) \\ D^B_r \big( \sqrt{\widehat{v}_{\eta(t)}} \widetilde{W}_t  g(t,\widehat{x}_t, \widehat{v}_t) \big) \end{array} \right), \qquad u_r =
\left( \begin{array}{c} 0 \\ \frac{1}{\sqrt{\widehat{v}_{\eta(r)}}}{1}_{[0,t]}(r) \end{array} \right), \qquad r \in [0,T],
$$
we obtain
 \begin{align*}
  E \left( \sqrt{\widehat{v}_{\eta(t)}}  \widetilde{W}_t \partial_x g (t,\widehat{x}_t,\widehat{v}_t)  \right) &= \frac{1}{t \sqrt{1- \rho^2}}  E \left( \int_0^{t} D_r^B \left( \sqrt{\widehat{v}_{\eta(t)}}  \widetilde{W}_t g(t,\widehat{x}_t,\widehat{v}_t) \right) \frac{1}{\sqrt{\widehat{v}_{\eta(r)}}} \d r \right)
  \\ & =   \frac{1}{t \sqrt{1- \rho^2}} E \left[ \sqrt{\widehat{v}_{\eta(t)}} \widetilde{W}_t g(t,\widehat{x}_t,\widehat{v}_t)  \int_0^t \frac{1}{\sqrt{\widehat{v}_{\eta(r)}}} \d B_r \right]
  \end{align*}}
\end{proof}

\bigskip {
Now set
$$g(t,x,v)= \frac{\theta}{2}  u (t,x,v)- \frac{\theta}{2} \partial_{x} u(t,x,v) - \frac{\rho\theta^2}{2(1+\kappa\Delta_t)} \partial_{v} u (t,x,v)
$$
Theorem 3.1 implies that $g$ is bounded  and also provides the required smoothness assumptions for $g$. Moreover, the estimates \eqref{deriv-1} and \eqref{deriv-2} imply that
\begin{align}  | \partial_x g(t,x,v)| +|\partial_v g(t,x,v)|  \leq c \Big(1+ \frac{1}{v}\Big)(1+M^q), \qquad t \in [0,T], \, x \in \mathbb{R}, \,  v  \geq 0        \label{est-g}  \end{align}
Recall that $$\inf_{t \in [0,T]} \widehat{v}_{\eta(t)}  \geq  C$$ 
for some constant  $C=C_{\kappa, \lambda, \theta, v_0, t_1, \ldots, t_N}>0$ by Lemma \ref{inverse_moments} (4). Hence 
the  assumption of  Lemma  \ref{lem:ibp} is a consequence of Lemma \ref{mall-smooth}, Lemma \ref{inverse_moments} (1) and the Malliavin chain rule. Thus we can write
$$ e_n^{(2)} =\int_{t_n}^{t_{n+1}}  \frac{1}{t \sqrt{1- \rho^2}}  E \left[ \sqrt{\widehat{v}_{\eta(t)}} \widetilde{W}_t g(t,\widehat{x}_t,\widehat{v}_t)I^B_t   \right]  dt$$
with
$$I^B_t= \int_0^t \frac{1}{\sqrt{\widehat{v}_{\eta(r)}}} \d B_r, \qquad t \in [0,T]$$
Since moreover $ \widetilde{W}_t$ is independent  of $B,\widehat{x}_{\eta(t)}$ and $\widehat{v}_{u}, u \in [0,\eta(t)]$, it follows that
$$  E \left[ \sqrt{\widehat{v}_{\eta(t)}} \widetilde{W}_t g(t,\widehat{x}_{\eta(t)},\widehat{v}_{\eta(t)}) I_t^B\right] = 0 $$ 
and hence
\begin{align*}
e_n^{(2)}  &=
 \int_{t_n}^{t_{n+1}}  \frac{1}{t \sqrt{1- \rho^2}}  E \left[ \sqrt{\widehat{v}_{\eta(t)}} \widetilde{W}_t I_t^B \big( g(t,\widehat{x}_t,\widehat{v}_t) - g(t,\widehat{x}_{\eta(t)},\widehat{v}_{\eta(t)})\big)\right]  dt
\end{align*} 
The mean value theorem  now gives 
\begin{align*}
g(t,\widehat{x}_t,\widehat{v}_t) - g(t,\widehat{x}_{\eta(t)},\widehat{v}_{\eta(t)}) &=  (\widehat{x}_t-\widehat{x}_{\eta(t)} ) \int_0^1 \partial_x g(t, \chi \widehat{x}_t + (1- \chi) \widehat{x}_{\eta(t)}, \chi  \widehat{v}_t +(1-\chi)  \widehat{v}_{\eta(t)})  \d \chi \\ & \qquad +
(\widehat{v}_t-\widehat{v}_{\eta(t)} ) \int_0^1 \partial_v g(t,\chi \widehat{x}_t + (1- \chi) \widehat{x}_{\eta(t)}, \chi  \widehat{v}_t +(1-\chi)  \widehat{v}_{\eta(t)})  \d \chi
\end{align*}
Using \eqref{est-g} and
$$ \frac{1}{\chi v_1 +(1-\chi)v_2} \leq \frac{1}{v_1}+ \frac{1}{v_2}, \qquad v_1,v_2 >0,$$
it follows that 
\begin{align} \label{est_err-2}
e_n^{(2)}  & \leq
 c  (1+M^q) \int_{t_n}^{t_{n+1}} \frac{1}{\sqrt{t}} E   |\widetilde{W}_t | ( |\widehat{v}_t-\widehat{v}_{\eta(t)}|  +   | \widehat{x}_t-\widehat{x}_{\eta(t)} |)  \Theta_t \d t
 \end{align}
with
 \begin{align*} 
 \Theta_t =  \frac{|I_t^B|}{\sqrt{t}} \left( \sqrt{ \widehat{v}_{\eta(t)}}  + \frac{1}{ \sqrt{ \widehat{v}_{t}}} \frac{\sqrt{ \widehat{v}_{\eta(t)}}}{\sqrt{ \widehat{v}_{t}}}  +  \frac{1}{ \sqrt{ \widehat{v}_{\eta(t)}}} \right), \qquad t \in [0,T]  
\end{align*}
Lemma \ref{inverse_moments} (1), (2), (5), (6) imply now 
that
$$ \sup_{t \in [0,T]}  \left( E \left| \frac{I_t^B}{\sqrt{t}} \right|^p \right)^{1/p} +  \sup_{t \in [0,T]} \left( E\left(   \sqrt{ \widehat{v}_{\eta(t)}}  + \frac{1}{ \sqrt{ \widehat{v}_{t}}} \frac{\sqrt{ \widehat{v}_{\eta(t)}}}{\sqrt{ \widehat{v}_{t}}}  +  \frac{1}{ \sqrt{ \widehat{v}_{\eta(t)}}} \right)^p \right)^{1/p} \leq c $$
for $2 \leq p < \frac{4 \kappa \lambda}{\theta^2}-2$.
Hence the Cauchy-Schwarz inequality yields
\begin{align} {E} \Theta_t^{1+\delta} \leq  c \label{theta_1} \end{align}
for $\delta \in \left(0, \frac{2\kappa \lambda}{\theta^2}-2 \right)$. Note that {\it (F)} ensures that the interval  for $\delta$ is non-empty.
Lemma \ref{smooth_inc} implies
\begin{align} \label{smooth_th}  \frac{1}{\sqrt{t}}  \left( E     |\widetilde{W}_t  |^q ( |\widehat{v}_t-\widehat{v}_{\eta(t)}|  +   | \widehat{x}_t-\widehat{x}_{\eta(t)} |)^q \right)^{1/q} \leq c  \frac{1}{\sqrt{t}} \Delta_t, \qquad t \in [0,T], \end{align}
for any $q \geq 1$. Hence  \eqref{est_err-2}, \eqref{theta_1}, \eqref{smooth_th}  and an application of H\"older's inequality give
\begin{align*} 
e_n^{(2)} \leq  c  (1+M^q)\int_{t_n}^{t_{n+1}} \frac{1}{\sqrt{t}} (t -\eta(t)) \d t
\end{align*}
}

Using \eqref{en_1_final} we  now obtain
$$ |e_n| \leq c (1+M^q) \Delta^2 + c  (1+M^q)\int_{t_n}^{t_{n+1}} \frac{1}{\sqrt{t}} (t -\eta(t)) \d t$$
Since $ [0,T] \ni t \mapsto \frac{1}{\sqrt{t}} \in (0, \infty)$ is integrable, it follows
\begin{align} |E(h({x}_N,v_N)-E(h(x_T,v_T))| \leq \sum_{n=1}^{N}|e_n| \leq c (1+M^q) \cdot \Delta \label{order_delta}, \end{align}
where $h(x,v)=f(x) \phi_M(v)$ with $f$ satisfying $(S)$ and the localizing function $\phi_M$.

\medskip

Now write 
$$ f(x)=h(x,v) + f(x)(1-\phi_M(v))$$
By construction we have
$$ f(x)(1- \phi_M(v)) = 0 \qquad \textrm{if} \qquad v \leq  M $$
and
$$ |f(x)(1- \phi_M(v))| \leq  \|f \|_{\infty} \left( :=\sup_{x \in \mathbb{R}} |f(x)| \right)    \qquad \textrm{if} \qquad v > M$$
The Markov inequality, Lemma \ref{CIR_moments} (2) and Lemma \ref{inverse_moments} (3) imply the existence of a constant $c_{tail}>0$ such that
 \begin{align*}  P( V_T \geq   M ) + P( v_N \geq  M ) \leq  c_{tail} \exp \left(-\frac{2 \kappa}{\theta^2} M \right)  \end{align*}
Hence we obtain
 $$ |E f(X_T)(1- \phi_M(V_T))| + |E f({x}_N)(1- \phi_M(v_N))| \leq c_{tail} \|f \|_{\infty} \exp \left(-\frac{2 \kappa}{\theta^2} M \right)  $$
Choosing $$M= - \frac{\theta^2}{2 \kappa} \log(\Delta) $$
and using \eqref{order_delta}
we end up with
$$   |E f(X_T) - Ef(x_N)| \leq   c \left(1 +  \Big( \frac{\theta^2}{2 \kappa}\Big)^q |\log(\Delta)|^q  \right) \cdot \Delta  + c_{tail}  \|f \|_{\infty} \cdot \Delta $$
which finishes the proof of Theorem \ref{theorem-main}.

\subsection{Proof of Theorem \ref{theorem-weak}}

Note that we only have  to show 
\begin{align} \label{to_show} E|x_N - X_T| \rightarrow 0 , \qquad \Delta \rightarrow 0,  \end{align}
since $L^1$-convergence implies convergence in probability, $X_T$ has a Lebesgue density, see e.g. \cite{dens}, and $f$ is continuous up to a finite number of points.
Assumption {\it (Int)} provides then the uniform integrability required to deduce
$$ Ef(x_N) \rightarrow E f(X_T), \qquad \Delta \rightarrow 0$$

To establish \eqref{to_show} write
\begin{align*}
x_N-X_T =  & - \frac{1}{2} \sum_{k=0}^{N-1} (v_k-V_{t_k}) (t_{k+1}-t_{k}) +   \sum_{k=0}^{N-1} (\sqrt{v_k}-\sqrt{V_{t_k}}) \Delta_k Z  
\\ &  +
 \frac{1}{2} \sum_{k=0}^{N-1} \int_{t_{k}}^{t_{k+1}} (V_t -V_{t_k}) \d t -  \sum_{k=0}^{N-1} \int_{t_{k}}^{t_{k+1}} (\sqrt{V_t} -\sqrt{V_{t_k}})
\d Z_t
\end{align*}
with the Brownian motion $Z = \rho W + \sqrt{1- \rho^2} B$. The It\=o isometry, the Minkowski and Lyapunov inequalities and $ |\sqrt{x} - \sqrt{y}|  \leq  \sqrt{|x-y|}$ for $x,y \geq 0 $ now yield
\begin{align*}
E |x_N-X_T| \leq  &  \sum_{k=0}^{N-1} E |v_k-V_{t_k}| (t_{k+1}-t_{k}) +  \sqrt{ \sum_{k=0}^{N-1}  E|v_k-V_{t_k}|  (t_{k+1}-t_{k}) }
\\ &  +
 \sum_{k=0}^{N-1} \int_{t_{k}}^{t_{k+1}} E |V_t -V_{t_k}| \d t +  \sqrt{ \sum_{k=0}^{N-1} \int_{t_{k}}^{t_{k+1}} E |V_t -V_{t_k}| \d t  }
\end{align*}
Lemma  \ref{CIR_moments} (3)  implies
\begin{align} \label{heston_strong}
E |x_N-X_T| \leq   \sum_{k=0}^{N-1} E |v_k-V_{t_k}| (t_{k+1}-t_{k}) +  \sqrt{ \sum_{k=0}^{N-1}  E|v_k-V_{t_k}|  (t_{k+1}-t_{k}) } + c \Delta^{1/4} \end{align}
for some constant $c>0$ independent of $\Delta$.
Using  the drift-implicit square-root Euler approximation  $a_k$ given by \eqref{sqrt_euler} and 
$ v_k \geq a_k$, see  \eqref{dom_sqrt},  we have
$$ E |v_k-V_{t_k}|  \leq   E|v_k -a_k|  + E |V_{t_k} - a_k| =  E(v_k-a_k) + E |V_{t_k} - a_k|, \qquad k=0,1, \ldots, N, $$
and thus \begin{align} \label{dimp_strong}
 E |v_k-V_{t_k}|   \leq  E(v_k-V_{t_k}) + 2 E |V_{t_k} - a_k|, \qquad k=0,1, \ldots, N  \end{align}
It remains to analyse the first summand on the right hand side of \eqref{dimp_strong}. Here we have
$$  E v_{k+1} =    E  v_{k} + \kappa (\lambda - E v_{k+1}) (t_{k+1}-t_k), \qquad k=0,1, \ldots, N-1,$$
which is the drift-implicit Euler approximation of 
$$ {E} V_t = v_0 +  \int_0^t \kappa(\lambda - E V_s) \,ds, \qquad t \in  [0,T],$$ and hence it follows
$$ \sup_{k=0, \ldots, N}  |{E} (v_k - V_{t_k})| \leq c \cdot \Delta$$
for some constant $c>0$ independent of $\Delta$.
This estimate, equation \eqref{dimp_strong} and Lemma 3.6 now give
$$   \sup_{k=0, \ldots, N}  E |v_k-V_{t_k}| \rightarrow 0, \qquad \Delta \rightarrow 0 $$
which finally together with \eqref{heston_strong} yields \eqref{to_show}.

\subsection{Proof of Proposition \ref{prop-int}}

Since 
$$ {v}_{k+1} - {v}_k - \kappa(\lambda-{v}_{k+1}) (t_{k+1}-t_k)  - \frac{\theta^2}{4} \big((\Delta_k W)^2 - (t_{k+1}-t_k)   \big) =  \theta \sqrt{{v}_k} \Delta_k W $$
for $k=0,1, \ldots$, we
have 
$$  \rho \sum_{k=0}^{N-1} \sqrt{{v}_k} \Delta_k W \leq \frac{|\rho|}{\theta} \left( v_0 + \kappa \lambda T +  \frac{\theta^2}{4} \sum_{k=0}^{N-1} (\Delta_k W)^2 \right)$$
if $\rho < 0$.
Thus we obtain for $$ x_N  =   - \frac{1}{2} \sum_{k=0}^{N-1} v_k (t_{k+1}-t_{k})  +  \rho \sum_{k=0}^{N-1} \sqrt{{v}_k} \Delta_k W + \sqrt{1 -\rho^2}   \sum_{k=0}^{N-1} \sqrt{v_k} \Delta_k B   $$ the upper bound
\begin{align*}
x_N =  & - \frac{1}{2} \sum_{k=0}^{N-1} v_k (t_{k+1}-t_{k})  +  \frac{|\rho|}{\theta} \left( v_0 + \kappa \lambda T \right) +  \frac{|\rho|\theta}{4} \sum_{k=0}^{N-1} (\Delta_k W)^2 + \sqrt{1 -\rho^2}   \sum_{k=0}^{N-1} \sqrt{v_k} \Delta_k B  
\end{align*}
and hence
$$ \exp( p \, x_N) \leq c \exp \left(   - \frac{p}{2}  \sum_{k=0}^{N-1} v_k (t_{k+1}-t_{k}) + p  \sqrt{1 -\rho^2}   \sum_{k=0}^{N-1} \sqrt{v_k} \Delta_k B + \frac{ p |\rho| \theta}{4}  \sum_{k=0}^{N-1} (\Delta_k W)^2 \right) $$ 
for some constant $c>0$ depending only on the parameters of the Heston model and $p,T$.
Since $v_k$, $k=0,1, \ldots,$ and $B$ are independent we have
$$ \sum_{k=0}^{N-1} \sqrt{v_k} \Delta_k B\,  \stackrel{\mathcal{L}}{=} \,  B_1 \sqrt{\sum_{k=0}^{N-1} v_k (t_{k+1}-t_k)} $$
and therefore
\begin{align*}
E  \exp( p \,  x_N)  &= E (E( \exp( p \, x_N) | W))  \\ &\leq c E   \exp \left(   \left(  \frac{p^2(1-\rho^2)}{2} -\frac{p}{2}  \right) \sum_{k=0}^{N-1} v_k (t_{k+1}-t_{k})   + \sum_{k=0}^{N-1} \frac{ p|\rho|\theta}{4} (\Delta_k W)^2   \right) 
\end{align*}
Note that
$$ \frac{ p^2(1-\rho^2) }{2}  - \frac{p}{2}  \leq 0 $$
iff
$$ p(1-\rho^2) \leq 1 $$ 
For $p=1+ \rho^2$  and $\rho <0$, this is satisfied and it follows
$$E  \exp( p \, x_N) \leq c  E  \exp \left(  \sum_{k=0}^{N-1} \frac{p |\rho| \theta}{4} (\Delta_k W)^2 \right) $$
The moment generating function of $W_1^2$ is given by
$$ E \exp (t W_1^2)= \exp \left( -\frac{1}{2} \ln(1-2t) \right), \qquad t < \frac{1}{2},$$  and we obtain
$$  E  \exp \left(  \sum_{k=0}^{N-1} \frac{p |\rho| \theta}{4} (\Delta_k W)^2 \right)  =   \exp \left( -\sum_{k=0}^{N-1}\frac{1}{2} \ln\left(1-  \frac{p |\rho|  \theta}{2} (t_{k+1}-t_k) \right) \right)   $$
for $\Delta < 2 / p |\rho| \theta $.  If $\Delta < 1/  p |\rho| \theta $ we have $1-  \frac{p |\rho|  \theta}{2} (t_{k+1}-t_k) \geq \frac{1}{2}$ for all $k=0, 1, \ldots, N-1$ and hence
$$ \ln\left(1-  \frac{p |\rho|  \theta}{2} (t_{k+1}-t_k) \right) \geq -p |\rho|  \theta (t_{k+1}-t_k), \qquad k=0,1, \ldots,N-1$$
Thus
it follows that
$$  E  \exp \left(  \sum_{k=0}^{N-1} \frac{p |\rho| \theta}{4} (\Delta_k W)^2 \right) \leq   \exp \left( T \frac{p |\rho| \theta}{2} \right), $$
which concludes the proof.

\bigskip
\bigskip

{\bf Acknowledgements.} \,\, The authors would like to thank Camelia A. Pop for  very helpful comments on the results from \cite{FP} and an unknown mathematician for pointing out a mistake in the proof of Theorem 1.1.

\bibliographystyle{plain}
\bibliography{refs}

\end{document}